\documentclass[11pt]{article}
\usepackage{enumerate}
\usepackage{amssymb,a4wide,latexsym,makeidx,epsfig,fleqn}
\usepackage{amsthm}
\usepackage{amsmath}
\usepackage{enumerate}
\newtheorem{prob}{Problem}
\newtheorem{lem}{Lemma}[section]
\newtheorem{thm}{Theorem}[section]
\newtheorem{cor}{Corollary}[section]

\newtheorem{clm}{Claim}[section]

\allowdisplaybreaks[4]
\begin{document}
\textwidth 150mm \textheight 225mm
\title{The $\alpha$-index of graphs without intersecting triangles/quadrangles as a minor
\thanks{Supported by the National Natural Science Foundation of China (No.  12271439).}}
\author{{Yanting Zhang$^{a,b}$, Ligong Wang$^{a,b,}$\thanks{Corresponding author.}}\\
\small $^a$ School of Mathematics and Statistics,\\
\small Northwestern Polytechnical University, Xi’an, Shaanxi 710129, P.R. China\\
\small $^b$ Xi’an-Budapest Joint Research Center for Combinatorics,\\ 
\small Northwestern Polytechnical University,  Xi’an, Shaanxi 710129, P.R. China\\
\small E-mail:
yantzhang@163.com, lgwangmath@163.com.}
\date{}
\maketitle
\begin{center}
\begin{minipage}{120mm}
\vskip 0.3cm
\begin{center}
{\small {\bf Abstract}}
\end{center}
{\small The $A_{\alpha}$-matrix of a graph $G$ is the convex linear combination of the adjacency matrix $A(G)$ and the diagonal matrix of vertex degrees $D(G)$, i.e., $A_{\alpha}(G) = \alpha D(G) + (1 - \alpha)A(G)$,
where $0\leq\alpha \leq1$. The $\alpha$-index of $G$ is the largest eigenvalue of $A_\alpha(G)$.  Particularly, the matrix $A_0(G)$ (resp.  $2A_{\frac{1}{2}}(G)$) is exactly the adjacency matrix (resp. signless Laplacian matrix) of $G$. He, Li and Feng [arXiv:2301.06008 (2023)] determined the extremal graphs with maximum adjacency spectral radius
among all graphs of sufficiently large order without intersecting triangles and quadrangles as a minor, respectively.
Motivated by the above results of He, Li and Feng, 
in this paper we characterize
the extremal graphs with maximum $\alpha$-index among all graphs of sufficiently large order without intersecting triangles and quadrangles as a minor for any $0<\alpha<1$, respectively. As by-products, we determine the extremal graphs with maximum signless Laplacian radius among all graphs of sufficiently large order without intersecting triangles and quadrangles as a minor, respectively.

\vskip 0.1in \noindent {\bf Key Words}: \ Intersecting triangles minor free, Intersecting quadrangles minor free, $\alpha$-index, Extremal graph. \vskip
0.1in \noindent {\bf AMS Classification}: \ 05C50, 05C75. }
\end{minipage}
\end{center}

\section{Introduction }
\label{sec:ch6-introduction}
Let $G=(V(G),E(G))$ be an undirected simple graph.
The \textit{adjacency matrix}  of $G$ is the $n \times n$ matrix   $A(G)=(a_{ij})$, where $a_{ij}=1$
if $v_{i}$ is adjacent to $v_{j}$, and $0$ otherwise. The largest eigenvalue of $A(G)$ is called the \textit{adjacency spectral radius} of $G$.  Let $D(G)$ be the diagonal matrix of vertex degrees of $G$.
The \textit{signless Laplacian matrix} of $G$ is defined as $Q(G)=D(G)+A(G)$. The largest eigenvalue of $Q(G)$,
denoted by $q(G)$, is called the \textit{signless Laplacian spectral radius} of $G$.
For two vertex disjoint graphs $G$ and $H$, we denote by $G \cup H$ the \textit{union} of $G$ and $H$, and $G \vee H$  the \textit{join} of $G$ and $H$, i.e., joining every vertex of $G$ to every vertex of $H$. Denote by $kG$ the union of $k$ disjoint copies of $G$.
As usual, denote by $K_{n}$ (resp. $P_{n}$) the \textit{complete graph} (resp. the \textit{path}) of order $n$, and $K_{m, n}$ the \textit{complete bipartite graph} on $m + n$ vertices. For a graph $G$, let $\overline{G}$ be its \textit{complement}. Denote by $F_s$ the friendship graph obtained from $s$ triangles by sharing a common
vertex, i.e., $F_s= K_1 \vee sK_2$, where $s\geq1$. Also, denote by $Q_t$ the graph obtained from $t$ quadrangles by sharing a common vertex. Let $M_{n-t}$ be an $(n-t)$-vertex graph consisting of $\lfloor\frac{n-t}{2}\rfloor$ disjoint edges. 

Given two graphs $G$ and $H$, $H$ is a \textit{minor} of $G$
if $H$ can be obtained from a subgraph of $G$ by contracting edges.
A graph is said to be \textit{$H$-minor-free} if it does not contain $H$ as a minor. Minors play a key role in graph theory, and extremal problems on forbidding
minors have attracted appreciable amount of interests.
For example, it is useful for
studying the structures and properties of graphs. Wagner \cite{Wagner} showed that a  graph is planar if and only if it is $\{K_{3,3}, K_5\}$-minor-free, while Ding and Oporowski \cite{Ding}  showed that a graph is outer-planar if and only if it is  $\{K_{2,3}, K_4\}$-minor-free.

The $A_{\alpha}$-matrix of a graph $G$ is defined by Nikiforov \cite{Nikiforov} as the convex linear combination of the adjacency matrix $A(G)$ and the diagonal matrix of vertex degrees $D(G)$, i.e.,
$$A_{\alpha}(G) = \alpha D(G) + (1 - \alpha)A(G),$$
where $0\leq\alpha \leq1$.
The \textit{$\alpha$-index} (or \textit{$A_\alpha$-spectral radius} ) of $G$, denoted by $\rho_{\alpha}(G)$, is the largest eigenvalue of $A_{\alpha}(G)$. 
Clearly, $A_{0}(G)=A(G)$ and $ 2A_{\frac{1}{2}}(G) =Q(G).$
Besides, Nikiforov \cite{Nikiforov} posed the $A_{\alpha}$-spectral extrema problem:
\begin{prob}\label{prob-0}
Given a graph $F$, what is the maximum $\alpha$-index of a graph $G$ of order $n$, with no subgraph isomorphic to $F$?
\end{prob}
At present, there are plentiful relevant results about Problem \ref{prob-0} when $F$ is a specific minor. When $F$ is a $K_r$-minor, Shi and Hong \cite{Shi} considered the case $r=4$ for $\alpha =0$,  Hong \cite{Hong} investigated the case $r=4$ for $\alpha =0$, Tait \cite{M.T} discussed the case $r\geq 3$ for sufficiently large $n$ and $\alpha =0$, while Chen, Liu and Zhang \cite{C.M} considered the case $r\geq 3$ for sufficiently large $n$ and $0 < \alpha < 1$. All the above studies indicate that Problem \ref{prob-0} has been completely solved when $F$ is a $K_r$-minor ($r\geq 3$) for sufficiently large $n$ and $0 \leq \alpha < 1$. Besides, there is a lot of discussion about Problem \ref{prob-0} when $F$ is a $K_{s,t}$-minor, see \cite{Nikiforov1, M.Q1, Nikiforov2, M.T, B.W, M.Q, M.F, C.M2,C.M, Y.T1, Y.T}.  Very recently, He, Li and Feng \cite{Xiao} showed that $K_s \vee \overline{K}_{n-s}$ (resp. $K_t \vee M_{n-t}$) is the unique extremal graph with maximum adjacency spectral radius
among all $F_{s}$-minor-free (resp. $Q_{t}$-minor-free) graphs of sufficiently large order $n$, where $s \geq 1$ (resp. $t\geq 1$).
Motivated by the above results of He, Li and Feng, 
in this paper we investigate Problem \ref{prob-0} when $F$ is an $F_{s}$-minor (resp. a $Q_{t}$-minor) for sufficiently large $n$ and $0<\alpha<1$, where $s \geq 1$ (resp. $t\geq 1$). The main results of this paper are as follows. 
\begin{thm}\label{thm::1.1}
Let $0<\alpha<1$ and $s \geq 1$. If $G$ is an $F_{s}$-minor-free graph of sufficiently large order $n$, then 
$$
\rho_{\alpha}(G) \leq \rho_{\alpha} (K_{s} \vee \overline{K}_{n-s}),
$$
with equality if and only if $G=K_{s} \vee \overline{K}_{n-s}$.
\end{thm}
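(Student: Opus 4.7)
The plan follows the standard Perron-eigenvector plus local-swap strategy, adapted to the $A_\alpha$-matrix and to the minor-closed class of $F_s$-minor-free graphs. Write $G^\ast := K_s \vee \overline{K}_{n-s}$. A preliminary step is to check that $G^\ast$ is itself $F_s$-minor-free: in any branch-set decomposition realising an $F_s$-minor, each of the $s$ peripheral edges of $F_s$ is witnessed by an edge between two disjoint branch sets, and because $\overline{K}_{n-s}$ is independent, each such pair must use at least one vertex of the $s$-vertex clique $K_s$. This exhausts $V(K_s)$, forcing the centre branch set to lie inside $\overline{K}_{n-s}$ and hence to be a single vertex with only $s$ neighbours, contradicting the need to meet all $2s$ other branch sets. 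Consequently any extremal $G$ satisfies $\rho_\alpha(G) \geq \rho_\alpha(G^\ast)$, and a quotient-matrix computation on the equitable partition $\{V(K_s), V(\overline{K}_{n-s})\}$ gives $\rho_\alpha(G^\ast) \geq \alpha(n-1)$; in fact, $\rho_\alpha(G^\ast) = \alpha(n-1) + O_s(1)$ for large $n$.

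Let $x > 0$ be the Perron eigenvector of $A_\alpha(G)$, normalised so $x_{u^\ast} = \max_v x_v = 1$ at some vertex $u^\ast$. Substituting into the $A_\alpha$-eigenequation at $u^\ast$ and using $x_u \leq 1$ gives $d(u^\ast) \geq \rho_\alpha(G) \geq \alpha(n-1)$, so $u^\ast$ already has linearly many neighbours. The crux is to promote this to $d(u^\ast) = n - 1$. I would accomplish this by a Kelmans-type neighbourhood swap: for any $v \not\sim u^\ast$, form $G'$ by replacing $N_G(v)$ with $N_G(u^\ast) \setminus \{v\}$, making $v$ an open twin of $u^\ast$. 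One verifies that (i) the swap strictly increases $\rho_\alpha$, by a Rayleigh-quotient computation on $A_\alpha$ using $x_{u^\ast} \geq x_v$ and $0 < \alpha < 1$; and (ii) the swap preserves $F_s$-minor-freeness, via a minor-pullback argument exploiting the twin relation: any $F_s$-minor in $G'$ that uses $v$ can be rerouted through $u^\ast$, reducing to a decomposition that avoids $v$ and hence lives inside $G$. Step (i) then contradicts extremality of $G$ unless no such $v$ exists, forcing $d(u^\ast) = n - 1$.

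Once $u^\ast$ is universal, writing $G = u^\ast \vee H$ with $H := G - u^\ast$, the $F_s$-minor-freeness of $G$ forces the matching number of $H$ to be at most $s-1$, for a matching of size $s$ in $H$ together with $u^\ast$ already realises $F_s$ as a subgraph. By the classical Erd\H{o}s--Gallai theorem, the unique graph of order $n - 1$ with matching number at most $s - 1$ and maximum edge count (for $n$ large) is $K_{s-1} \vee \overline{K}_{n-s}$. To upgrade from edge- to $\rho_\alpha$-extremality, I would iterate the Perron-maximum/swap routine inside $H$: pick the Perron-maximum vertex $u^{\ast\ast}$ of $H$, show via the same swap that it is universal in $H$, and continue. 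After $s$ iterations the extracted vertices form a $K_s$ joined to all of the remaining $n - s$ vertices, and these remaining vertices must then be pairwise non-adjacent (any edge there, combined with the extracted $K_s$, would produce an $F_s$-minor in $G$). Hence $G = G^\ast$.

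The main obstacle is the swap step: both the strict $\rho_\alpha$-increase in (i) when $\alpha$ is small, and especially the minor-pullback in (ii), which must handle every way the vertex $v$ can sit inside a branch-set decomposition of $G'$, including the subtle case where $u^\ast$ itself already occupies a different branch set. The iteration in the final paragraph is also delicate because what is inherited by $H = G - u^\ast$ is only the weaker matching-number condition rather than $F_{s-1}$-minor-freeness, so the inductive hypothesis must be formulated within this enlarged class.
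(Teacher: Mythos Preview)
Your plan diverges substantially from the paper's proof, and both steps you flag as obstacles are genuine gaps rather than technicalities. For (ii), the twin-based minor-pullback is simply false in the case you call subtle. Take $G$ to be $K_{2,3}$ on parts $\{a,b\}$ and $\{x,y,z\}$ together with a pendant $v$ at $a$; this $G$ is $F_2$-minor-free. Replacing $N_G(v)$ by $N_G(b)=\{x,y,z\}$ produces $K_{3,3}$, which does contain an $F_2$-minor (contract $ax$; the resulting degree-$4$ vertex is the centre, and the matching $\{by,vz\}$ supplies the two peripheral triangles). In this model $v$ and $b$ occupy singleton branch sets for two \emph{non-adjacent} outer vertices of $F_2$, so neither can be rerouted through the other. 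Since your justification for (ii) makes no use of the hypothesis that $u^\ast$ has maximal Perron entry, the argument as written does not go through; any valid version would have to exploit extremality or the lower bound $d(u^\ast)\geq\rho_\alpha^\ast$ in a way you have not indicated. Step (i) is also unjustified for the twin-creation swap you describe: the hypothesis $x_{u^\ast}=\max_w x_w$ does not preclude $N_G(v)\supsetneq N_G(u^\ast)$, and in that situation your swap only deletes edges, so the Rayleigh quotient decreases. The final iteration inherits these problems and adds a new one, since the swap inside $H=G-u^\ast$ would now have to preserve the bounded-matching-number condition rather than $F_s$-minor-freeness.

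By contrast, the paper never attempts to force a single vertex to be universal. It partitions $V(G^\ast)$ into $L=\{u:x_u>\epsilon\}$ and $S=V\setminus L$, and combines Mader's linear edge bound with the bipartite edge bound for $F_s$-minor-free graphs (Lemma~\ref{lem::2.1}) to control $e(L)$, $e(L,S)$ and $e(S)$. A second-order eigenvector inequality, $\rho_\alpha^\ast(\rho_\alpha^\ast+1-\alpha n)\geq(1-\alpha)s(n-s)$, together with an averaging induction then produces \emph{simultaneously} $s$ vertices $v_1,\dots,v_s$ with $x_{v_i}\geq 1-O(\epsilon)$ and $d(v_i)\geq(1-O(\epsilon))n$. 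The structural lemma of He--Li--Feng (Lemma~\ref{lem::2.2}) forces their common neighbourhood $B$ to be independent and shows that $A=\{v_1,\dots,v_s\}$ can be completed to a clique while remaining $F_s$-minor-free; with the quantitative bound $x_v=O(1/n)$ for $v\notin A$ now in hand, a single controlled Rayleigh swap empties the small residual set $R=V\setminus(A\cup B)$.
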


\begin{thm}\label{thm::1.2}
Let $0<\alpha<1$ and $t \geq 1$. If $G$ is a $Q_t$-minor-free graph of sufficiently large order $n$, then 
$$
\rho_{\alpha}(G) \leq \rho_{\alpha} (K_t \vee M_{n-t} ),
$$
with equality if and only if $G=K_t \vee M_{n-t}$.
\end{thm}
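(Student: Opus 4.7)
The plan is to follow the spectral extremal template for minor-free problems, adapting the adjacency-case approach of He--Li--Feng to the $A_\alpha$-matrix. Write $G^* := K_t \vee M_{n-t}$. First, I would verify that $G^*$ is itself $Q_t$-minor-free: any 4-cycle through a chosen center $v^* \in V(G^*)$ uses at least one further vertex of the $K_t$-part among its three non-central vertices, and only $t-1$ such vertices remain; hence one cannot assemble $t$ internally vertex-disjoint 4-cycles through $v^*$, and a similar count persists under contractions. Let $G$ be an extremal $Q_t$-minor-free graph of order $n$, so $\rho := \rho_\alpha(G) \ge \rho_\alpha(G^*)$. Solving the quotient eigenvalue problem on the equitable partition $(V(K_t), V(M_{n-t}))$, or using a two-valued test vector, yields $\rho_\alpha(G^*) = \alpha n + \Theta(1)$ for large $n$.

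Let $\mathbf{x}$ be the Perron eigenvector of $A_\alpha(G)$, normalized so that $\max_v x_v = 1$, attained at $u^*$. From the eigen-equation
\[
\rho x_v = \alpha d(v) x_v + (1-\alpha) \sum_{w \sim v} x_w,
\]
applied at $u^*$ and combined with $x_w \le 1$, one obtains $d(u^*) \ge \rho$. A bootstrap argument, which uses that only few neighbors of $u^*$ can have $x_w$ much less than $1$ before $\rho$ drops below its lower bound, then upgrades this to $d(u^*) = (1-o(1))n$; the same conclusion holds for every vertex in the heavy set $L := \{v : x_v \ge 1 - \epsilon\}$ for a suitably small constant $\epsilon = \epsilon(\alpha, t) > 0$. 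Comparing the contributions of heavy versus non-heavy vertices in the row sums of the eigen-equation then forces $|L| \ge t$; otherwise the Perron mass cannot sustain $\rho = \alpha n + \Theta(1)$.

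The $Q_t$-minor-freeness next forces $|L| = t$ exactly and $G - L$ to have maximum degree at most $1$. If $|L| \ge t+1$, fix $v^* \in L$, pick $t$ further heavy vertices $L_1, \dots, L_t$, and for each $i$ select a pair $u_i, u_i' \in V(G) \setminus L$ of common neighbors of $\{v^*, L_i\}$, with all $2t$ such external vertices pairwise distinct, which is possible because the heavy vertices are near-universal and $n$ is large; the four-cycles $v^* u_i L_i u_i' v^*$ are internally vertex-disjoint and directly exhibit a $Q_t$-minor. If instead $|L| = t$ but some $w \notin L$ has two neighbors $a, b$ in $G - L$, take the quadrangle $v^* a w b v^*$ for any $v^* \in L$ and build $t-1$ further internally-disjoint quadrangles through $v^*$ using the remaining $t-1$ heavy vertices and $2(t-1)$ fresh external common neighbors. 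Either case contradicts the $Q_t$-minor-freeness of $G$, so $G = K_t \vee H$ with $H$ of maximum degree $\le 1$; strict edge-monotonicity of $\rho_\alpha$ on connected graphs, combined with the $Q_t$-minor-freeness of $K_t \vee M_{n-t}$, forces $H$ to be a maximum matching, i.e., $G = G^*$. The main obstacle is the bootstrap from the trivial bound $d(u^*) \ge \rho = \alpha n + O(1)$ to the near-universal estimate $d(u^*) = (1-o(1))n$, and from there to pinning $|L|$ exactly at $t$; this is where the regime $0 < \alpha < 1$ demands more delicate quantitative bookkeeping than the adjacency case $\alpha = 0$, since the eigen-equation couples degrees and entries multiplicatively, so small eigenvector discrepancies must be amplified into clean combinatorial consequences.
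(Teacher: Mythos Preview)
Your strategy tracks the paper's in broad outline, but there is a real gap at the end. From ``$|L|=t$ and $G-L$ has maximum degree $\le 1$'' you jump to $G = K_t \vee H$, yet neither ingredient of a join is available: the heavy vertices are only \emph{near}-universal, so a small residual set $R = V(G) \setminus \bigl(L \cup \bigcap_{v \in L} N(v)\bigr)$ of vertices missing some $L$-edge may persist; and nothing in your argument forces $G[L]$ to be complete. Your quadrangle $v^* a w b v^*$ for the $P_3$-case also presupposes $v^* \sim a$ and $v^* \sim b$, which can fail exactly when $a$ or $b$ lies in $R$, so that step is circular until $R$ is controlled.

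The paper does not rule out $R$ combinatorially. It first proves a uniform eigenvector bound $x_v \le \tfrac{\sqrt{2\alpha}}{2(C_1+2)}$ for every $v \notin A$ (Claim~\ref{clm::4.3}), and then performs a Rayleigh-quotient edge swap (Claim~\ref{clm::4.4}): delete all edges from $R$ into $R \cup B$ and add all missing $R$--$A$ edges, obtaining a subgraph of $K_t \vee M_{n-t}$; each added edge contributes at least $\alpha x_z^2 \ge \alpha(1-C'\epsilon)^2$ for some $z \in A$, while the at most $C_1+2$ deleted edges per $R$-vertex contribute only $O(x_v^2)$ each, so $\mathbf{x}^T A_\alpha \mathbf{x}$ strictly increases, contradicting extremality unless $R=\emptyset$. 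The clique on $A$ is forced separately via Lemma~\ref{lem::2.9}(ii), which says completing $A$ to a clique preserves $Q_t$-minor-freeness. Your outline needs analogues of both steps; the swap in particular is where the quantitative eigenvector control you flag as ``the main obstacle'' is actually cashed in, and without it the argument cannot close. (The bootstrap to find $t$ heavy vertices is likewise more delicate than your sketch suggests---the paper builds them one at a time by an averaging/induction argument, Claim~\ref{clm::3.5}, using the bipartite edge bound of Lemma~\ref{lem::2.7}---but you at least acknowledge that as unfinished.)
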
	

Taking $\alpha =\frac{1}{2}$ in Theorem \ref{thm::1.1} (resp. Theorem \ref{thm::1.2}), we obtain the unique extremal graph with maximum signless Laplacian radius among all $F_{s}$-minor-free (resp. $Q_{t}$-minor-free) graphs of sufficiently large order $n$ as follows.

\begin{cor}\label{cor::1.1}
Let $s \geq 1$ and $G$ be an $F_{s}$-minor-free graph of sufficiently large order $n$. Then 
$$
q(G) \leq q (K_{s} \vee \overline{K}_{n-s}),
$$
with equality if and only if $G=K_{s} \vee \overline{K}_{n-s}$.
\end{cor}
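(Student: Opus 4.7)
The plan is to derive Corollary \ref{cor::1.1} as an immediate specialization of Theorem \ref{thm::1.1}. The key observation, already recorded in the introduction, is that the signless Laplacian matrix and the $A_{1/2}$-matrix differ only by a scalar factor of $2$, namely $Q(G)=2A_{\frac{1}{2}}(G)$. Consequently, their largest eigenvalues satisfy $q(G)=2\rho_{1/2}(G)$, and this identity holds for every graph, in particular for both $G$ and $K_{s}\vee\overline{K}_{n-s}$.

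Since $\alpha=\frac{1}{2}$ lies in the open interval $(0,1)$, Theorem \ref{thm::1.1} applies verbatim: for every sufficiently large $n$ and every $F_s$-minor-free graph $G$ of order $n$, one has
\[
\rho_{1/2}(G)\;\leq\;\rho_{1/2}\bigl(K_{s}\vee\overline{K}_{n-s}\bigr),
\]
with equality if and only if $G=K_{s}\vee\overline{K}_{n-s}$. Multiplying both sides of this inequality by $2$ and invoking the identity $q(\cdot)=2\rho_{1/2}(\cdot)$ on each side yields $q(G)\leq q(K_{s}\vee\overline{K}_{n-s})$, with the same extremal characterization.

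There is effectively no obstacle to overcome here; the corollary is a one-line consequence of the more general $A_{\alpha}$-spectral result, and the entire proof consists of justifying the substitution $\alpha=\frac{1}{2}$ and applying the scalar relation between $Q(G)$ and $A_{1/2}(G)$. Accordingly, the write-up need only record this substitution, the resulting inequality, and the transfer of the equality case from Theorem \ref{thm::1.1}.
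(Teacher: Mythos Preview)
Your proposal is correct and matches the paper's own approach exactly: the paper does not give a separate proof but simply states that Corollary~\ref{cor::1.1} follows by taking $\alpha=\tfrac{1}{2}$ in Theorem~\ref{thm::1.1}, using $Q(G)=2A_{1/2}(G)$.
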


\begin{cor}\label{cor::1.2}
Let $t \geq 1$ and $G$ be a $Q_t$-minor-free graph of sufficiently large order $n$. Then 
$$
q(G) \leq q(K_t \vee M_{n-t}),
$$
with equality if and only if $G=K_t \vee M_{n-t}$.
\end{cor}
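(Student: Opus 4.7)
The plan is to derive Corollary \ref{cor::1.2} as an immediate specialization of Theorem \ref{thm::1.2} at $\alpha = \tfrac{1}{2}$. The crucial algebraic observation, already recorded in the introduction, is that
$$2A_{1/2}(G) = D(G) + A(G) = Q(G),$$
so passing to largest eigenvalues gives $q(G) = 2\rho_{1/2}(G)$ for every graph $G$. This identity is the only bridge one needs between the $\alpha$-index framework and the signless Laplacian spectral radius.

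With the identity in hand, the argument reduces to a direct invocation of Theorem \ref{thm::1.2} with the permissible choice $\alpha = \tfrac{1}{2}$. For any $Q_t$-minor-free graph $G$ of sufficiently large order $n$, Theorem \ref{thm::1.2} yields
$$\rho_{1/2}(G) \leq \rho_{1/2}(K_t \vee M_{n-t}),$$
with equality if and only if $G = K_t \vee M_{n-t}$. Multiplying through by $2$ and using $q(\cdot) = 2\rho_{1/2}(\cdot)$ on both sides produces the desired inequality $q(G) \leq q(K_t \vee M_{n-t})$, while the uniqueness statement is inherited verbatim, since the equality cases of $\rho_{1/2}$ and $q$ coincide.

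Since the deduction is purely formal, there is essentially no obstacle to overcome here; the whole content of the corollary is absorbed into the proof of Theorem \ref{thm::1.2}. The only point worth verifying is that the ``sufficiently large $n$'' hypothesis transfers without alteration, which it does because the threshold on $n$ furnished by Theorem \ref{thm::1.2} at $\alpha = \tfrac{1}{2}$ is precisely the threshold needed for Corollary \ref{cor::1.2}. This completes the plan.
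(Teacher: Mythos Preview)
Your proposal is correct and matches the paper's approach exactly: the paper states that Corollary~\ref{cor::1.2} follows by taking $\alpha=\tfrac{1}{2}$ in Theorem~\ref{thm::1.2}, using the identity $Q(G)=2A_{1/2}(G)$ noted in the introduction. There is nothing to add.
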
	

\section{Preliminaries}
In this section, we will list some symbols and  useful lemmas. 
For $A, B \subseteq V(G)$, $e(A, B)$ (resp. $E(A, B)$) is denoted the number  (resp. the set) of the edges of $G$ with one end vertex in $A$ and the other in $B$, and particularly, $e(A, A)$ (resp. $E(A, A)$) is simplified by $e(A)$ (resp. $E(A)$). Let $G[A,B]$ be the \textit{complete bipartite subgraph} of $G$ with vertex partition $A$ and $B$.
For a vertex  $v \in V(G)$, we write $N_{G}(v)$ (resp. $d_{G}(v)$) for the set (resp. the number) of neighbors of $v$ in $G$, that is, $d_{G}(v)=|N_{G}(v)|$. Let $A $ be a subset of $V(G)$. We write $d_{A}(v)$ for the number of neighbors of  $v$  in $A$, that is,  $d_{A}(v)=|N_{G}(v) \cap A|$. The graph $G[A]$ is the \textit{induced subgraph} by $A$. $G[A]$ is called a \textit{clique} if it is a complete subgraph of $G$. 
For graph notations and concepts undefined here, readers are referred to \cite {Bondy}.

\begin{lem}(\cite{Nikiforov})\label{lem::2.1-}
Let $0\leq\alpha<1$. Then the $\alpha$-index of any proper subgraph of a connected graph is smaller than the $\alpha$-index of the original graph.
\end{lem}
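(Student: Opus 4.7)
The plan is to derive the strict inequality from standard Perron--Frobenius machinery, exploiting the connectedness of $G$. First I would observe that, since $0 \leq \alpha < 1$ and $G$ is connected, the matrix $A_{\alpha}(G)$ is non-negative, symmetric, and irreducible: its off-diagonal entries are exactly $(1-\alpha)a_{ij}$, so their non-zero pattern coincides with the edge pattern of $G$. Hence $\rho_{\alpha}(G)$ is a simple eigenvalue admitting a strictly positive Perron eigenvector $\mathbf{x}$.

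Let $H$ be a proper subgraph of $G$. I would split the argument into two cases. In the spanning case $V(H)=V(G)$, at least one edge $uv$ is missing from $H$; removing such an edge decreases the $(u,v)$, $(v,u)$, $(u,u)$, and $(v,v)$ entries of $A_{\alpha}$ by $(1-\alpha)$, $(1-\alpha)$, $\alpha$, and $\alpha$ respectively, while leaving every other entry unchanged. Iterating over all deleted edges gives $A_{\alpha}(H) \leq A_{\alpha}(G)$ entrywise with $A_{\alpha}(H) \neq A_{\alpha}(G)$. In the non-spanning case $V(H) \subsetneq V(G)$, set $S = V(H)$ and compare $A_{\alpha}(H)$ with the principal submatrix $A_{\alpha}(G)[S]$: the diagonal entry at $u \in S$ satisfies $\alpha d_{H}(u) \leq \alpha d_{G}(u)$, and off-diagonal entries of $A_{\alpha}(H)$ are at most the corresponding entries of $A_{\alpha}(G)[S]$, so $A_{\alpha}(H) \leq A_{\alpha}(G)[S]$ entrywise.

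To finish, I would invoke two standard monotonicity facts from Perron--Frobenius theory: if $M$ is an irreducible non-negative matrix and $M'$ satisfies $0 \leq M' \leq M$ with $M' \neq M$, then $\rho(M') < \rho(M)$; and any principal submatrix of $M$ indexed by a proper subset of rows/columns has spectral radius strictly less than $\rho(M)$. Both are proved by pairing $\mathbf{x}$ with the non-negative, non-zero difference $M - M'$ (or by padding the submatrix with zeros and pairing with $\mathbf{x}$) and using the simplicity of $\rho(M)$ to rule out equality. These facts give $\rho_{\alpha}(H) < \rho_{\alpha}(G)$ directly in the spanning case, and via the chain $\rho_{\alpha}(H) \leq \rho(A_{\alpha}(G)[S]) < \rho_{\alpha}(G)$ in the non-spanning case. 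No serious technical obstacle is anticipated; the argument is essentially a clean application of Perron--Frobenius theory, and the only subtle point, the strict drop of the Perron root, is handled by the simplicity of $\rho_{\alpha}(G)$ together with the positivity of $\mathbf{x}$.
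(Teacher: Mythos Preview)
Your argument is correct and is precisely the standard Perron--Frobenius route one would take. Note, however, that the paper does not supply its own proof of this lemma: it is quoted from Nikiforov's work \cite{Nikiforov} and used as a black box, so there is no in-paper proof to compare against. Your write-up effectively reconstructs the expected proof from the cited source.
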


\begin{lem} (\cite{C.M}) \label{lem::2.3}
Let $0<\alpha<1$, $n \geq k\geq1$ and $G=K_{k} \vee \overline{K}_{n-k}$. Then $\rho_{\alpha}(G) \geq \alpha(n-1)+(1-\alpha)(k-1)$. In particular, if $n \geq \frac{(2 k-1)^{2}}{2 \alpha^{2}}- \frac{8 k^{2}-2 k-1}{2 \alpha}+2 k(k+1)$, then $\rho_{\alpha}(G) \geq \alpha n+\frac{2 k-1-(2 k+1) \alpha}{2 \alpha}$.
\end{lem}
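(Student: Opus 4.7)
The plan is to exploit the fact that $G=K_{k}\vee\overline{K}_{n-k}$ admits the equitable vertex partition $\pi=\{V(K_{k}),\,V(\overline{K}_{n-k})\}$, so that $\rho_{\alpha}(G)$ equals the largest eigenvalue of the $2\times 2$ quotient matrix
$$
B_{\pi}=\begin{pmatrix}
\alpha(n-1)+(1-\alpha)(k-1) & (1-\alpha)(n-k)\\
(1-\alpha)k & \alpha k
\end{pmatrix}.
$$
For the first bound I would simply plug the $0/1$ indicator vector of $V(K_{k})$ into the Rayleigh quotient for $A_{\alpha}(G)$: a direct count yields $x^{\top}A_{\alpha}(G)x=k\left[\alpha(n-1)+(1-\alpha)(k-1)\right]$ and $x^{\top}x=k$, giving $\rho_{\alpha}(G)\geq\alpha(n-1)+(1-\alpha)(k-1)$ at once. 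Equivalently, this lower bound is nothing but the $(1,1)$-entry of $B_{\pi}$.

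For the sharper second bound, write $a=\alpha(n-1)+(1-\alpha)(k-1)$, $b=\alpha k$, $c=(1-\alpha)^{2}k(n-k)$ and let $f(\lambda)=(\lambda-a)(\lambda-b)-c$ be the characteristic polynomial of $B_{\pi}$. Denoting $\rho_{0}=\alpha n+\frac{2k-1-(2k+1)\alpha}{2\alpha}$, the inequality $f(\rho_{0})\leq 0$ forces $\rho_{0}$ to lie between the two roots of $f$ and hence $\rho_{\alpha}(G)\geq\rho_{0}$. The pleasant structural feature here is that $\rho_{0}-a$ and $\rho_{0}-b-\alpha(n-k)$ are both independent of $n$; calling these $A$ and $C$, expansion gives
$$
f(\rho_{0})=(n-k)\left[\alpha A-(1-\alpha)^{2}k\right]+AC,
$$
and a one-line calculation shows $\alpha A-(1-\alpha)^{2}k=\frac{\alpha-1}{2}$, independent of both $n$ and $k$. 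Hence $f(\rho_{0})\leq 0$ reduces to the linear-in-$n$ condition $n\geq k+\frac{2AC}{1-\alpha}$.

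The main obstacle is then the purely algebraic task of matching $k+\frac{2AC}{1-\alpha}$ with the explicit threshold $\frac{(2k-1)^{2}}{2\alpha^{2}}-\frac{8k^{2}-2k-1}{2\alpha}+2k(k+1)$ stated in the lemma. Substituting $A=k\alpha+\frac{2k-1}{2\alpha}-\frac{4k-1}{2}$ and $C=\frac{2k-1}{2\alpha}-\frac{2k+1}{2}$, the product $2AC$ generates contributions at orders $\alpha^{-2},\alpha^{-1},\alpha^{0},\alpha^{1}$; after dividing by $1-\alpha$ and adding $k$, each order must match the corresponding term of the claimed expression. Nothing here is conceptually deep, but the bookkeeping is delicate---an errant factor of $1-\alpha$ would shift the threshold---so I would sanity-check the identity on small cases such as $(k,\alpha)=(1,1/4)$ and $(2,1/2)$ before committing to the general expansion.
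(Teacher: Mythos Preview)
The paper does not supply its own proof of this lemma; it is quoted verbatim from Chen--Liu--Zhang \cite{C.M} and used as a black box. So there is no in-paper argument to compare against.

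That said, your proposal is correct and is exactly the standard route one would take. The equitable partition $\{V(K_k),V(\overline{K}_{n-k})\}$ gives the quotient matrix $B_\pi$ you wrote, and by the usual equitable-partition lemma (or simply by observing that the Perron vector of $A_\alpha(G)$ is constant on each part) its largest root is $\rho_\alpha(G)$. Your Rayleigh-quotient argument for the first bound is clean; the paper in fact recomputes the characteristic polynomial $g(x)=x^{2}-(\alpha n+s-1)x+(2\alpha n-\alpha s-\alpha-n+s)s$ in the course of Claim~\ref{clm::3.4-}, which is the same polynomial you call $f$ up to relabelling.

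For the second bound your reduction is sound: the identity $\alpha A-(1-\alpha)^{2}k=\tfrac{\alpha-1}{2}$ is genuine (a two-line check), so $f(\rho_0)=-\tfrac{1-\alpha}{2}(n-k)+AC$ and the condition $f(\rho_0)\le 0$ becomes $n\ge k+\tfrac{2AC}{1-\alpha}$. Your sanity checks at $(k,\alpha)=(1,\tfrac14)$ and $(2,\tfrac12)$ already confirm that $k+\tfrac{2AC}{1-\alpha}$ matches the stated threshold; the general expansion goes through as well, with $2AC=\tfrac{(2k-1)^2}{2\alpha^2}-\tfrac{6k(2k-1)}{2\alpha}+\tfrac{12k^2-1}{2}-\alpha(2k^2+k)$ and the division by $1-\alpha$ collapsing the last two terms. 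The only caveat is to state explicitly why $\rho_\alpha(G)$ equals the \emph{larger} root of $B_\pi$ (positivity of the Perron vector on both parts), after which $f(\rho_0)\le 0$ immediately gives $\rho_0\le\rho_\alpha(G)$.
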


\begin{lem} (\cite{Cioabă}) \label{lem::2.6}
Let $N_1, \cdots, N_k$ be $k$ finite sets. Then
$$
\left|N_1 \cap N_2 \cap \cdots \cap N_k\right| \geq \sum_{i=1}^k\left|N_i\right|-(k-1)\left|\bigcup_{i=1}^k N_i\right|.
$$
\end{lem}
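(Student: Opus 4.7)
The plan is a short double-counting argument on the union $U := \bigcup_{i=1}^{k} N_i$. For each element $x \in U$, I will let $m(x)$ denote the number of indices $i$ for which $x \in N_i$. Interchanging the order of summation gives the identity
$$\sum_{i=1}^{k}|N_i| \;=\; \sum_{x \in U} m(x).$$
Setting $I := N_1 \cap N_2 \cap \cdots \cap N_k$, the key observation is that $m(x) = k$ precisely when $x \in I$, while $m(x) \le k-1$ for every $x \in U \setminus I$. Splitting the right-hand sum according to whether $x$ lies in $I$ then yields
$$\sum_{x \in U} m(x) \;\le\; k\,|I| + (k-1)\bigl(|U|-|I|\bigr) \;=\; |I| + (k-1)\,|U|,$$
and rearranging produces the claimed inequality $|I| \ge \sum_{i=1}^{k}|N_i| - (k-1)\,|U|$.

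\textbf{Alternative route.} One can alternatively induct on $k$: the base case $k=2$ is precisely the inclusion–exclusion identity $|N_1 \cap N_2| = |N_1|+|N_2|-|N_1 \cup N_2|$, which matches the target bound with coefficient $-(k-1) = -1$. For the inductive step one applies this $k=2$ identity to the pair $A := N_1 \cap \cdots \cap N_{k-1}$ and $B := N_k$, invokes the inductive hypothesis on $|A|$, and then enlarges the two unions that appear, namely $\bigcup_{i=1}^{k-1}N_i$ (coming from the hypothesis) and $A \cup B$ (coming from the $k=2$ identity), to the common set $U = \bigcup_{i=1}^{k}N_i$. Since both of these terms enter with a negative coefficient, the enlargement preserves the direction of the inequality and produces the bound with the correct coefficient $-(k-1)$ in front of $|U|$.

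\textbf{Main obstacle.} I do not foresee any genuine obstacle: the statement is a short set-theoretic inequality and both routes above are entirely elementary. In the double-counting approach, the only content is the dichotomy $m(x) = k$ versus $m(x) \le k-1$. In the inductive approach, the only point requiring momentary care is to verify that enlarging the intermediate unions to $U$ does not reverse the inequality, which is immediate from the negative signs. Between the two, I would present the double-counting argument as it is the cleanest and self-contained in two displayed lines.
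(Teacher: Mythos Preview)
Your proposal is correct. Both the double-counting argument and the inductive route are valid and complete; the double-counting version in particular is clean and would need no further polishing.

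Note, however, that the paper does not supply its own proof of this lemma: it is quoted as a known result from \cite{Cioabă} and used as a black box. So there is no ``paper's proof'' to compare against. Your write-up therefore goes beyond what the paper itself does, and either of your two approaches would serve perfectly well as a self-contained justification.
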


\begin{lem} (\cite{Mader}) \label{lem::2.5}
Let $G$ be an $n$-vertex graph. For every graph $H$, if $G$ is $H$-minor-free, then there exists a constant $C$ such that
$$
e(G) \leq C n.
$$
\end{lem}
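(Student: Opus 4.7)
The plan is to first reduce to the case $H=K_t$ with $t=|V(H)|$, and then prove the linear edge bound for $K_t$-minor-free graphs by induction on $t$.

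For the reduction: since $H$ is a subgraph of $K_{|V(H)|}$, any graph that contains $K_t$ as a minor automatically contains $H$ as a minor (simply delete the extra edges of the minor model). Taking the contrapositive, every $H$-minor-free graph is $K_t$-minor-free for $t=|V(H)|$, so it suffices to exhibit a constant $C_t$ with $e(G)\le C_t\,n$ whenever $G$ is a $K_t$-minor-free graph on $n$ vertices.

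For the induction, the base cases $t\le 3$ are elementary: a $K_3$-minor-free graph is a forest, giving $e(G)\le n-1$. For the inductive step I would assume the bound with constant $C_{t-1}$ for $K_{t-1}$-minor-free graphs and pick $C_t$ substantially larger (Mader's original choice is essentially $C_t=2^{t-3}$). Take a hypothetical $K_t$-minor-free counterexample $G$ of smallest order with $e(G)>C_t\,n$. Deleting a vertex of degree at most $C_t$ would produce a smaller counterexample, so one may assume $\delta(G)>C_t$; similarly, contracting any edge $uv$ of $G$ would produce a smaller counterexample unless $|N(u)\cap N(v)|$ is forced to be large. Thus every edge of $G$ sits inside a large common neighbourhood.

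The decisive step is to extract the forbidden minor: pick an edge $uv$ and let $W=N(u)\cap N(v)$. The vertex arising from contracting $uv$ is adjacent to every vertex of $W$, so any $K_{t-1}$-minor inside $G[W]$ would extend to a $K_t$-minor of $G$ by using $\{u,v\}$ as the $t$-th branch set. Hence $G[W]$ must itself be $K_{t-1}$-minor-free, and the induction hypothesis yields $e(G[W])\le C_{t-1}|W|$. The main obstacle is the quantitative step that closes the contradiction: one has to balance the three quantities $\delta(G)$, $|W|$, and $e(G[W])$ (typically via an averaging argument over the edges of $G$ together with an iterated low-degree-vertex deletion that boosts the minimum degree) so that the density forced on $W$ by the minimality reductions exceeds what the induction hypothesis allows. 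This delicate counting is the technical heart of Mader's 1967 argument; once it is carried out, the induction closes with $C_t$ depending only on $t$, and the reduction above finishes the proof for an arbitrary forbidden minor $H$.
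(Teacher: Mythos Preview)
The paper does not prove this lemma at all; it is quoted from Mader's 1967 paper and used as a black box throughout. So there is no ``paper's own proof'' to compare against --- your sketch is in fact an outline of the very argument being cited.

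On the content of your sketch: the reduction from an arbitrary $H$ to $K_t$ with $t=|V(H)|$ is correct, and the inductive scheme (minimum counterexample, boost $\delta(G)$, force large $|N(u)\cap N(v)|$ via edge contraction, then observe $G[N(u)\cap N(v)]$ is $K_{t-1}$-minor-free) is indeed Mader's line of attack. You are right that the closing density computation is where the work lies. One small correction: in the minimality reduction, contracting an edge in a $K_t$-minor-free graph keeps it $K_t$-minor-free automatically, so the contracted graph is not a ``counterexample'' in the sense of failing the minor hypothesis --- rather, minimality forces $e(G/uv)\le C_t(n-1)$, and comparing with $e(G)>C_t n$ is what gives $|N(u)\cap N(v)|\ge C_t$. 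With that adjustment (and picking $C_t=2C_{t-1}$ or Mader's $2^{t-3}$), an averaging argument inside the common neighbourhood finishes the induction as you indicate.
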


\begin{lem} (\cite{Xiao}) \label{lem::2.1}
Let $s\geq1$ and $G$ be an $n$-vertex $F_{s}$-minor-free bipartite graph with vertex partition $A$ and $B$. If $|A|=a$ and $|B|=n-a$, then there exists a constant $C$ depending only on $s$ such that
$$
e(G) \leq C a+s n.
$$
\end{lem}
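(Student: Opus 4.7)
I would prove the lemma by induction on $n$, reducing to a high-minimum-degree setting and then exhibiting an $F_s$ minor to contradict the hypothesis.

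\emph{Inductive reduction.} Fix a constant $C = C(s)$ to be specified. If $G$ contains a vertex $b \in B$ with $d_G(b) \leq s$, remove it: the subgraph $G - b$ is bipartite and $F_s$-minor-free, on $n - 1$ vertices with $|A| = a$ unchanged, so the inductive hypothesis gives $e(G-b) \leq Ca + s(n-1)$ and hence $e(G) \leq Ca + sn$. Similarly, if some $v \in A$ has $d_G(v) \leq C$, removing $v$ and inducting yields $e(G) \leq C(a-1) + s(n-1) + C = Ca + s(n-1)$. It therefore suffices to derive a contradiction under the assumption $\delta_B(G) \geq s + 1$ and $\delta_A(G) \geq C + 1$, by producing an $F_s$ minor.

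\emph{Producing the $F_s$ minor via a link.} Pick any $v \in A$; its $\geq C + 1$ neighbors $N_G(v) \subseteq B$ each have $\geq s$ further neighbors in $A \setminus \{v\}$, so the bipartite link $H_v$ between $N_G(v)$ and $A \setminus \{v\}$ has at least $s(C+1)$ edges. If I can locate $s$ vertex-disjoint cherries $b_{i,1}$--$a_i'$--$b_{i,2}$ in $H_v$ (apices on the $A$-side), the branch sets $\{v\}$, $\{b_{i,1}\}$, $\{a_i', b_{i,2}\}$ for $i = 1, \ldots, s$ realize $F_s$ as a minor of $G$.

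\emph{Cherry extraction via case split.} Set $A^*_v := \{a' \in A \setminus \{v\} : |N_G(a') \cap N_G(v)| \geq 2s\}$. If $|A^*_v| \geq s$, a greedy selection works: the $i$-th choice $a_i' \in A^*_v$ still has at least $2s - 2(i-1) \geq 2$ unused neighbors in $N_G(v)$ after the prior cherries consume $2(i-1)$ of its common neighbors with $v$. Otherwise $|A^*_v| \leq s-1$, and double-counting the length-two paths through $v$ gives
\[
s \cdot d_G(v) \leq \sum_{b \in N_G(v)}(d_G(b) - 1) = \sum_{a' \in A \setminus \{v\}} |N_G(a') \cap N_G(v)| \leq (s-1)\, d_G(v) + (2s-1)(a-1),
\]
so $d_G(v) \leq (2s-1)(a-1)$; summing over $v \in A$ yields $e(G) \leq (2s-1) a (a-1)$.

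\emph{Main obstacle.} The quadratic bound $e(G) \leq (2s-1)a^2$ coming from the bad case is tight enough in the small-$a$ regime (where $(2s-1)a^2 \leq sn$) but loose in the intermediate regime $\sqrt{n} \ll a \ll n$. For $a = \Theta(n)$, Mader's Lemma~\ref{lem::2.5} closes the argument directly, provided $C$ is chosen so that $Ca + sn \geq C' n$ whenever $a$ is a constant fraction of $n$. The delicate step is handling the intermediate regime; here Lemma~\ref{lem::2.6} (the Bonferroni-type codegree inequality) can be combined with the $\delta_A, \delta_B$ lower bounds to show that many $A$-pairs must share at least $2s$ common neighbors, ultimately sharpening the $|A^*_v|<s$ dichotomy. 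Patching all regimes together with a single constant $C=C(s)$ is the main technical challenge.
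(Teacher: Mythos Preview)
The paper does not prove Lemma~\ref{lem::2.1}; it is quoted from \cite{Xiao} without argument, so there is no in-paper proof to compare against. I therefore evaluate your plan on its own merits.

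Your inductive reduction to $\delta_B\ge s+1$, $\delta_A\ge C+1$ is correct, and the cherry branch-set construction does yield an $F_s$ minor when it succeeds. The gap you flag, however, is a genuine missing idea rather than a detail. In the ``bad'' branch where $|A^*_v|<s$ for every $v\in A$, your double count gives only $e(G)\le (2s-1)a(a-1)$, which is quadratic in $a$ and does not imply $e(G)\le Ca+sn$ for any fixed $C=C(s)$ once $a$ grows. Your regime-splitting cannot be made to cover the whole range with a single constant: in the irreducible case the hypotheses $\delta_A\ge C+1$ and $e(G)\le C'n$ (Mader) already force $a\le C'n/(C+1)$, so choosing $C$ large pushes $a$ into the $o(n)$ regime automatically---exactly where Mader's $e(G)\le C'n$ is \emph{weaker} than the target $Ca+sn$. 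Thus the appeal to Mader for ``$a=\Theta(n)$'' is vacuous once $C$ is fixed, and the window $\sqrt{sn/(2s-1)}<a<C'n/(C+1)$, which is nonempty for large $n$, remains entirely unhandled. Your invocation of Lemma~\ref{lem::2.6} is also not substantiated: that inequality lower-bounds $\bigl|\bigcap_i N_i\bigr|$ from degree sums and has no evident bearing on sharpening the $|A^*_v|<s$ dichotomy or on producing disjoint cherries. As written, the plan lacks the structural step that converts the high-minimum-degree hypothesis into an $F_s$ minor without a residual dependence on $a$, so it is not a complete proof.
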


\begin{lem} (\cite{Xiao}) \label{lem::2.2}
Let $s\geq1$ and $G$ be an $n$-vertex $F_{s}$-minor-free graph. Suppose that $G$ contains a complete bipartite subgraph $K_{s,(1-\delta) n}=G[A, B]$, where $|A|=s$ and $|B|=(1-\delta)n$. Then the following two assertions are hold:\\
(i) if $(1-\delta) n \geq 2 s$, then $G[B]$ is $P_{2}$-free, and $d_{B}(v)\leq 1$ for any $v \in V(G) \backslash(A \cup B)$;\\
(ii) if $(1-2\delta)n \geq 2s+1$ and $G^{\prime}$ is a graph obtained from $G$ by adding edges to $A$ to make it a clique, then $G^{\prime}$ is also $F_{s}$-minor-free.
\end{lem}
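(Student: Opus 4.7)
The plan is to prove both parts by contradiction, directly constructing an $F_s$-minor of $G$ when the respective conclusion fails. The engine is the complete bipartite block $K_{s,(1-\delta)n} = G[A,B]$: since every $a \in A$ is adjacent to every $b \in B$, a triangle through $a$ is produced from any genuine edge in $B$, and more generally from any length-two path in $G$ whose endpoints both lie in $B$.

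For \textbf{part (i)}, assume $G[B]$ contains an edge $uv$. Fix $a_1 \in A$ as the prospective center of $F_s$ and take $a_1 uv$ as the first triangle. For each $i = 2, \ldots, s$ I would pick $2(s-1)$ distinct vertices $c_i, b_i \in B \setminus \{u,v\}$, available because $(1-\delta)n \ge 2s$, and form branch sets $S_i = \{a_i, c_i\}$ (connected by $a_i c_i$) and $T_i = \{b_i\}$. The adjacencies $S_i \sim a_1$, $T_i \sim a_1$, and $S_i \sim T_i$ are witnessed by the bipartite edges $a_1 c_i$, $a_1 b_i$, $a_i b_i$, respectively. Together with $\{a_1\}, \{u\}, \{v\}$ these branch sets realize $F_s$ as a minor of $G$, contradicting the hypothesis. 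The $d_B(v)\le 1$ claim uses the same template: if $v \notin A \cup B$ has two $B$-neighbors $b_1, b_2$, replace the triangle $a_1 u v$ by the branch-set pair $(\{v,b_1\}, \{b_2\})$, which is connected, has both halves adjacent to $a_1$ (via $b_1, b_2$), and mutually adjacent via $vb_2$.

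For \textbf{part (ii)}, suppose $G'$ contains $F_s$ as a minor, with branch sets $V_0, V_1, \ldots, V_{2s}$. Since $G'$ differs from $G$ only in at most $\binom{s}{2}$ edges inside $A$, the plan is to reroute each such ``new'' edge used by the embedding through a fresh vertex of $B$. For a new edge $a_ia_j$ used internally in some $V_k$, pick a fresh $b \in B \setminus \bigcup_m V_m$ and enlarge $V_k$ to $V_k \cup \{b\}$: the length-two path $a_i b a_j$ lies in $G$ by bipartite completeness, so $V_k \cup \{b\}$ stays connected in $G$ after discarding the forbidden edge. For a cross-edge $a_ia_j$ with $a_i \in V_k$ and $a_j \in V_l$, the same augmentation $V_k \mapsto V_k \cup \{b\}$ restores the adjacency $V_k \sim V_l$ via $b a_j \in E(G)$. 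Iterating over all new edges produces an $F_s$-minor of $G$, the sought contradiction.

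The \textbf{main obstacle} is guaranteeing a fresh $b$ at every reroute step. I would first pass to a minimum-size embedding, minimizing $\sum_i |V_i|$. Using that $G'[A \cup B]$ has diameter $2$, and that part (i)---valid under the stronger hypothesis of (ii)---restricts the usefulness of vertices in $V(G') \setminus (A \cup B)$, each $V_i$ can be chosen as a Steiner tree with $O(\deg_{F_s}(i))$ vertices, so $\sum_i |V_i| = O(s)$. The at most $\binom{s}{2}$ reroutings consume $O(s^2)$ further $B$-vertices, comfortably absorbed by the buffer of order $\delta n$ that $(1-2\delta)n \ge 2s+1$ provides when $n$ is sufficiently large. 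Verifying this counting is where the stronger hypothesis in (ii) is actually used.
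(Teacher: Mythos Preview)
This lemma is quoted from \cite{Xiao} and is not proved in the present paper, so there is no in-paper argument to compare your proposal against. Evaluating the proposal on its own merits:

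Your proof of part~(i) is correct. The branch sets $\{a_1\},\{u\},\{v\},\{a_2,c_2\},\{b_2\},\ldots,\{a_s,c_s\},\{b_s\}$ (and the analogous list for the $d_B(v)\le 1$ claim) are pairwise disjoint, connected, and realise exactly the adjacencies of $F_s$; the hypothesis $|B|\ge 2s$ is used precisely to supply the $2s$ distinct $B$-vertices.

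Part~(ii) has a genuine gap. The rerouting idea is sound --- in fact you need at most $|A|=s$ fresh $B$-vertices rather than $\binom{s}{2}$: since the branch sets are disjoint, at most $s$ of them meet $A$, and adjoining a single fresh $b\in B$ to each such $V_k$ both reconnects it in $G$ and restores every adjacency lost at an $A$-edge, because $b$ is $G$-adjacent to all of $A$. The problem is your supply argument. You assert that a minimum-size embedding satisfies $\sum_i|V_i|=O(s)$, but diameter~$2$ on $A\cup B$ and the degree restriction from~(i) do not by themselves bound the length of a leaf branch set (a path) or of the Steiner tree $V_0$; this claim needs an actual proof. More seriously, you close by appealing to ``$n$ sufficiently large,'' which is not among the hypotheses: the conclusion must hold whenever $(1-2\delta)n\ge 2s+1$, and for small admissible $n$ your asymptotic bookkeeping yields nothing. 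To repair the argument you must derive an explicit inequality $|B\setminus\bigcup_iV_i|\ge s$ from $(1-2\delta)n\ge 2s+1$ alone. The structural facts you would use --- that $B$ is independent in $G'$ and each vertex of $V(G)\setminus(A\cup B)$ has at most one $B$-neighbour, both consequences of part~(i) --- are the right ingredients, but the counting that ties them to the stated numerical hypothesis is not carried out.
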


\begin{lem} (\cite{Xiao}) \label{lem::2.7}
Let $t\geq1$ and $G$ be an $n$-vertex $Q_{t}$-minor-free bipartite graph with vertex partition $A$ and $B$. If $|A|=a$ and $|B|=n-a$, then there exists a constant  $C$ depending only on $t$ such that
$$
e(G) \leq C a+t n.
$$
\end{lem}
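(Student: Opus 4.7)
The plan is to induct on the order $n$, after fixing a constant $C = C(t)$ sufficiently large depending only on $t$. The base case where $n$ is bounded in terms of $t$ is trivial. For the inductive step, I would first try to reduce by deleting a low-degree vertex: if some $v \in B$ has $d_G(v) \leq t$, then $G - v$ is a bipartite $Q_t$-minor-free graph of order $n-1$ with the same $|A| = a$, so by induction $e(G - v) \leq Ca + t(n-1)$, whence $e(G) \leq Ca + tn$. Similarly, if some $v \in A$ has $d_G(v) \leq C + t$, induction on $G - v$ (which has $|A| - 1 = a-1$ and order $n - 1$) yields $e(G) \leq C(a-1) + t(n-1) + (C + t) = Ca + tn$.

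The remaining case, $\delta_B(G) \geq t + 1$ and $\delta_A(G) \geq C + t + 1$, is the critical one; here I would aim to exhibit a $Q_t$-subgraph in $G$, contradicting the hypothesis. Fix any $c \in A$: its neighborhood $N := N_G(c) \subseteq B$ has $|N| \geq C + t + 1$, and each $b \in N$ has at least $t$ neighbors in $A \setminus \{c\}$ besides $c$. In the bipartite subgraph $H = G[N, A \setminus \{c\}]$ (which has at least $t\,|N|$ edges), the goal is to find $t$ pairwise vertex-disjoint ``cherries'', i.e.\ triples $(y_i; b_i, b_i')$ for $i = 1, \ldots, t$, with distinct $y_i \in A \setminus \{c\}$, all $2t$ vertices $b_i, b_i' \in N$ distinct, and each $y_i$ adjacent to both $b_i$ and $b_i'$. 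Such cherries, combined with $c$, yield $t$ pairwise vertex-disjoint $4$-cycles $c - b_i - y_i - b_i' - c$ through $c$, giving a $Q_t$-subgraph centered at $c$.

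The principal obstacle is to show that $t$ disjoint cherries always exist when $C$ depends only on $t$. A greedy procedure removes one cherry at a time (one vertex of $A \setminus \{c\}$ and two of $N$ per step), and the degree condition $d_H(b) \geq t$ on $N$ ensures enough edges remain after each step, so long as $|N|$ is large compared to $a$ and $t$; this works directly when $a$ is bounded in terms of $t$. For larger $a$, additional structure must be exploited: for instance, Lemma~\ref{lem::2.5} forces $a \leq C_M n / (C + t + 1)$ for some $C_M = C_M(t)$, and a pigeonhole argument on the $(t+1)$-subsets $N_G(b) \cap A$ (for $b \in B$ with $d_G(b) \geq t + 1$) would then produce some $(t+1)$-subset of $A$ contained in at least $2t + 1$ such neighborhoods, yielding a $K_{t+1, 2t+1}$-subgraph of $G$ and therefore a $Q_t$-subgraph. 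The crux of the proof is a careful balance between these regimes, calibrating $C$ so that one of the two arguments always delivers the required $Q_t$.
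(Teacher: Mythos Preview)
The paper does not prove this lemma; it is quoted from \cite{Xiao} without argument, so there is nothing in the present paper to compare against directly.

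On its own merits, your inductive reduction is sound, and the content lies entirely in the residual case where every vertex of $B$ has degree at least $t+1$ and every vertex of $A$ has degree at least $C+t+1$. Here your two mechanisms leave a genuine gap. The greedy cherry extraction, at its final step $i=t$, guarantees only one surviving $A'$-neighbour for each remaining $b\in N(c)$, so a cherry is forced only when the remaining edge count exceeds $|A'|$; this unwinds to $|N(c)|>a+t-2$, and with only $|N(c)|\ge C+t+1$ available you need $a$ bounded in terms of $t$. The pigeonhole on $(t+1)$-subsets yields $K_{t+1,2t+1}$ only when $n-a>2t\binom{a}{t+1}$, i.e.\ essentially when $a=O(n^{1/(t+1)})$. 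But Lemma~\ref{lem::2.5} gives merely $a\le \tfrac{C_M}{C+t+1}\,n$, and for every fixed $C$ this still allows $a$ to be a positive fraction of $n$; then $\binom{a}{t+1}$ has order $n^{t+1}\gg n$ and the pigeonhole fails. No calibration of $C=C(t)$ bridges the range where $a$ is of order $\epsilon n$: the two ingredients cannot be balanced to cover it. Closing the gap presumably requires using the \emph{minor} hypothesis more substantially than searching for $Q_t$ as a subgraph.
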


\begin{lem} (\cite{Xiao}) \label{lem::2.9}
Let $t\geq1$ and $G$ be an $n$-vertex $Q_t$-minor-free graph. Suppose that $G$ contains a complete bipartite subgraph $K_{t,(1-\delta) n}=G[A, B]$, where $|A|=t$ and $|B|=(1-\delta)n$. Then the following two assertions are hold:\\
(i) if $(1-\delta) n \geq 2 t+1$, then $G[B]$ is $P_3$-free, and $d_B(v)\leq 2$ for any $v \in V(G) \backslash(A \cup B)$;\\
(ii) if $(1-3\delta)n \geq 3t+1$ and $G^{\prime}$ is a graph obtained from $G$ by adding edges to $A$ to make it a clique, then $G^{\prime}$ is also $Q_t$-minor-free.
\end{lem}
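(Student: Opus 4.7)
For Part (i), I would prove both claims by exhibiting $Q_t$ as a subgraph of $G$ (hence also as a minor), contradicting the hypothesis. Suppose first that $G[B]$ contains a $P_3 = x_1 x_2 x_3$. Fix any $u \in A$. Since $G[A,B]$ is complete bipartite, $u x_1 x_2 x_3$ is a $C_4$ through $u$. For each of the remaining $t-1$ vertices $a_i \in A \setminus \{u\}$, pick two fresh vertices $b_i, b_i' \in B$ and form the $C_4$ $u b_i a_i b_i'$. These $t$ quadrangles share only $u$ and use $3 + 2(t-1) = 2t+1$ distinct vertices of $B$, available by $|B| = (1-\delta)n \geq 2t+1$. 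Hence $G \supseteq Q_t$, contradiction. If instead $v \in V(G) \setminus (A \cup B)$ satisfies $d_B(v) \geq 3$ with neighbours $b_1, b_2, b_3 \in B$, then $u b_1 v b_2$ is a $C_4$ through $u$, and the same $A \setminus \{u\}$-based construction supplies the remaining $t-1$ quadrangles, using only $2 + 2(t-1) = 2t$ vertices of $B$; again $G \supseteq Q_t$, contradiction.

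For Part (ii), I would suppose for contradiction that $G'$ contains $Q_t$ as a minor, with model $\{M_u : u \in V(Q_t)\}$ chosen to minimise $\sum_u |M_u|$. By pruning each branch set to a tree whose leaves are attachment vertices and smoothing degree-$2$ internal vertices, we may assume $\sum_u |M_u| = O(t)$. For each edge $uu' \in E(Q_t)$, fix one realiser edge in $G'$ between $M_u$ and $M_{u'}$; call it \emph{bad} if it lies in $E(A) \setminus E_G(A)$, i.e., is one of the added edges inside $A$. To repair a bad realiser $a_1 a_2$ with $a_1 \in M_u$ and $a_2 \in M_{u'}$, choose a fresh vertex $b \in B$ lying outside every branch set and enlarge $M_u$ to $M_u \cup \{b\}$. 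Since $b \sim a_1$ by the complete bipartite structure $G[A,B]$, the enlarged branch set remains connected, and the edge $b a_2 \in E(G)$ now realises the $Q_t$-edge $uu'$ inside $G$.

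The main obstacle is verifying that sufficiently many fresh $B$-vertices are available. Because $b$ is adjacent to \emph{every} vertex of $A$, the single vertex $b$ added to $M_u$ simultaneously repairs every bad realiser leaving $M_u$, so at most $|V(Q_t)| = 3t+1$ fresh $B$-vertices are needed in total. The minimal branch sets together occupy only $O(t)$ vertices of $B$, so the number of unused $B$-vertices is at least $(1-\delta)n - O(t)$; the hypothesis $(1-3\delta)n \geq 3t+1$ is precisely the concrete bound ensuring that, once $n$ is large relative to $t$, this quantity exceeds $3t+1$. The repaired model then gives $Q_t$ as a minor of $G$, contradicting the hypothesis on $G$, and therefore $G'$ is $Q_t$-minor-free.
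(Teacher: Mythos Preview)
This lemma is quoted from \cite{Xiao} and is not proved in the present paper, so there is no ``paper's own proof'' to compare against. I will therefore comment on the correctness of your attempt on its own merits.

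Your argument for Part~(i) is correct. In fact, in the second claim you only use two $B$-neighbours of $v$, so your construction already yields a $Q_t$ as soon as $d_B(v)\geq 2$; the lemma's bound $d_B(v)\leq 2$ is simply not sharp, and what you wrote proves the (stronger, and still sufficient) inequality $d_B(v)\leq 1$.

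Part~(ii), however, has a genuine gap. The step ``smoothing degree-$2$ internal vertices, we may assume $\sum_u |M_u|=O(t)$'' is not valid for minor models. If $v\in M_u$ has tree-degree~$2$ with neighbours $p,q$ in the spanning tree of $M_u$, removing $v$ disconnects $M_u$ unless $pq$ happens already to be an edge of $G'$, which you have no control over. Put differently, a minimal model of $Q_t$ can be forced to use arbitrarily many vertices: for instance, every $C_4$-model in a long cycle $C_n$ uses all $n$ vertices, so $\sum_u|M_u|=n$, not $O(1)$. While that particular example does not satisfy the complete-bipartite hypothesis, it shows that your pruning/smoothing argument cannot by itself bound the model size. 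Consequently you have not established that any fresh $B$-vertices are available for the repair, and the sentence ``the hypothesis $(1-3\delta)n\geq 3t+1$ is precisely the concrete bound ensuring\dots'' is not justified: you never make precise how that numerical hypothesis enters.

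Your repair mechanism itself is sound: adding a single fresh $b\in B$ to an $A$-touching branch set $M_u$ does simultaneously restore connectivity of $M_u$ in $G$ (every $G$-component of $M_u$ meets $A$, and $b$ sees all of $A$) and supply $G$-realisers for every bad edge leaving $M_u$. Since at most $|A|=t$ branch sets touch $A$, you need at most $t$ fresh $B$-vertices, not $3t+1$. What is missing is an argument---necessarily using the structural information from Part~(i) and the specific bound $(1-3\delta)n\geq 3t+1$---that a model can be chosen leaving at least $t$ vertices of $B$ uncovered.
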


\section{Proof of Theorem \ref{thm::1.1}}
\renewcommand\proofname{\bf Proof of Theorem \ref{thm::1.1}.}
\begin{proof}
Let $0<\alpha<1$, $s \geq 1$ and $G^{*}$ be an extremal graph with the maximum $\alpha$-index among all $F_{s}$-minor-free graphs of sufficiently large order $n$. 
Set for short $\rho_{\alpha}^{*}=\rho_{\alpha}(G^{*})$ and $\mathbf{x} = (x_{u})_{u\in V (G^{*})}$ with the maximum entry $1$ be a positive vector of $A_{\alpha}(G^{*})$ corresponding to $\rho_{\alpha}^{*}$. 
\renewcommand\proofname{\bf Proof}
\begin{clm}\label{clm::3.1-}
$G^{*}$ is connected.
\end{clm}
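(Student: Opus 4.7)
The plan is to use a standard spectral-extremal swap: assume for contradiction that $G^{*}$ is disconnected, and then exhibit a connected $F_{s}$-minor-free graph on $n$ vertices whose $\alpha$-index strictly exceeds $\rho_{\alpha}^{*}$. Let $G_{1},\ldots ,G_{k}$, with $k\geq 2$, be the components of $G^{*}$, labelled so that $\rho_{\alpha}(G_{1})=\max_{i}\rho_{\alpha}(G_{i})$. Since the $\alpha$-index of a disconnected graph equals the maximum of those of its components, $\rho_{\alpha}^{*}=\rho_{\alpha}(G_{1})$ and $n_{1}:=|V(G_{1})|<n$.

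I would next fix any vertex $v_{0}\in V(G_{1})$ and construct a new graph $G'$ on $V(G^{*})$ whose edge set is $E(G_{1})$ together with the $n-n_{1}$ pendant edges $v_{0}w$, one for each $w\in V(G^{*})\setminus V(G_{1})$. By construction $G'$ is connected, has order $n$, and contains $G_{1}$ as a proper subgraph. The key step, and in my view the only real obstacle, is to verify that $G'$ is still $F_{s}$-minor-free. Suppose branch sets $V_{1},\ldots ,V_{2s+1}$ realise an $F_{s}$-minor of $G'$. Every vertex of $F_{s}=K_{1}\vee sK_{2}$ has degree at least $2$, so no pendant can form a singleton branch set; and if a multi-vertex branch set contains a pendant $w$, then it must also contain $w$'s unique $G'$-neighbour $v_{0}$, because the set has to be connected in $G'$. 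Hence every pendant that appears in any branch set lies in the single branch set $V_{i_{0}}$ containing $v_{0}$, and each such pendant is a leaf of $G'[V_{i_{0}}]$. Deleting these leaves from $V_{i_{0}}$ leaves a connected set $V_{i_{0}}'\subseteq V(G_{1})$ still containing $v_{0}$; together with the remaining branch sets $V_{j}$ ($j\neq i_{0}$), which already lie in $V(G_{1})$ and whose adjacencies to $V_{i_{0}}'$ coincide with those to $V_{i_{0}}$ in $G'$ (pendants contribute no crossing edges), this yields an $F_{s}$-minor of $G_{1}\subseteq G^{*}$, contradicting the hypothesis.

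Having shown that $G'$ is a connected $F_{s}$-minor-free graph of order $n$ that properly contains $G_{1}$, Lemma \ref{lem::2.1-} gives $\rho_{\alpha}(G')>\rho_{\alpha}(G_{1})=\rho_{\alpha}^{*}$, contradicting the extremality of $G^{*}$. Therefore $G^{*}$ must be connected.
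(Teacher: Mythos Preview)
Your proof is correct and follows essentially the same approach as the paper: attach vertices from other components as pendants to the component $G_{1}$ of maximum $\alpha$-index and invoke Lemma~\ref{lem::2.1-}. The only differences are cosmetic---the paper moves a single vertex rather than all of $V(G^{*})\setminus V(G_{1})$, and it asserts $F_{s}$-minor-freeness of the modified graph with ``Clearly'' where you supply the full branch-set argument (which is, if anything, an improvement).
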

\begin{proof}
Suppose to the contrary that $G^{*}$ is disconnected. Let $G_1$ be a component with the largest $\alpha$-index in $G^{*}$. Then $\rho_{\alpha}^{*}=\rho_{\alpha}(G_1)$. Now, take an arbitrary vertex $u \in V(G^{*}) \setminus V(G_1)$. Let $G^{\prime}$ be a graph obtained from $G^{*}$ by removing all edges incident with $u$ and then connecting $u$ to a vertex  $v\in V(G_1)$. Clearly, $G^{\prime}$ is still $F_s$-minor-free and $\rho_{\alpha}(G^{\prime})=\rho_{\alpha}(G_1 + uv)$. Since $G_1$ is a proper subgraph of $G_1 + uv$, we have $\rho_{\alpha}(G_1 + uv)>\rho_{\alpha}(G_1)$ by Lemma \ref{lem::2.1-}. It follows that $\rho_{\alpha}(G^{\prime}) > \rho_{\alpha}^{*}$, a contradiction.
\end{proof}	
	
\begin{clm}\label{clm::3.1}
$\rho_{\alpha}^{*}\geq \max \left\lbrace  \alpha(n-1), \alpha n+\frac{2 s-1-(2 s+1) \alpha}{2 \alpha} \right\rbrace $.
\end{clm}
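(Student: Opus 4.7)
The plan is to produce a concrete $F_s$-minor-free graph whose $\alpha$-index meets both lower bounds, and invoke the extremality of $G^{*}$. The obvious candidate is $K_s \vee \overline{K}_{n-s}$, which is also the conjectured unique extremizer, so establishing it as a feasible competitor gives the claim immediately.

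First I would verify that $K_s \vee \overline{K}_{n-s}$ is $F_s$-minor-free. Write the $s$ dominating vertices as $U$ and note that $V(K_s \vee \overline{K}_{n-s})\setminus U$ induces an independent set, so any connected subgraph disjoint from $U$ is a single vertex. Assume for contradiction that $F_s=\{c,u_1,v_1,\dots,u_s,v_s\}$ has disjoint connected branch sets $B_c,B_{u_1},B_{v_1},\dots,B_{u_s},B_{v_s}$ in $K_s \vee \overline{K}_{n-s}$. For each $i$, the required edge between $B_{u_i}$ and $B_{v_i}$ forces at least one of the two branch sets to contain a vertex of $U$; this already uses $\geq s$ vertices of $U$, hence exactly one per pair, and $B_c$ must lie entirely in the independent part, so $B_c$ is a single vertex $w$. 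But then $w$ needs an edge to each of the $2s$ pair branch sets, while $w$'s neighbors lie only in $U$, a set shared among $s$ of the $2s$ branch sets. This contradiction shows $K_s \vee \overline{K}_{n-s}$ is $F_s$-minor-free.

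Next, since $G^{*}$ has the maximum $\alpha$-index among $F_s$-minor-free graphs of order $n$, the competitor above yields
\[
\rho_{\alpha}^{*}\geq \rho_{\alpha}(K_s \vee \overline{K}_{n-s}).
\]
Applying Lemma \ref{lem::2.3} with $k=s$ gives $\rho_{\alpha}(K_s \vee \overline{K}_{n-s}) \geq \alpha(n-1)+(1-\alpha)(s-1) \geq \alpha(n-1)$, since $s\geq 1$ and $0<\alpha<1$. The same lemma, in its second assertion, gives
\[
\rho_{\alpha}(K_s \vee \overline{K}_{n-s}) \geq \alpha n+\frac{2 s-1-(2 s+1) \alpha}{2 \alpha}
\]
whenever $n$ exceeds the explicit threshold $\frac{(2 s-1)^{2}}{2 \alpha^{2}}- \frac{8 s^{2}-2 s-1}{2 \alpha}+2 s(s+1)$, which holds by the hypothesis that $n$ is sufficiently large. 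Combining the two bounds yields the claim.

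The only non-routine step is the minor-free verification, which is a short counting argument using the bipartite structure (independent part vs.\ universal clique); everything else is a direct citation of Lemma \ref{lem::2.3}.
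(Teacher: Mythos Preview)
Your proposal is correct and follows essentially the same approach as the paper: both use $K_s\vee\overline{K}_{n-s}$ as a competitor and invoke Lemma~\ref{lem::2.3} for the two lower bounds. The only difference is that you supply a detailed verification that $K_s\vee\overline{K}_{n-s}$ is $F_s$-minor-free, whereas the paper simply asserts this fact.
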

\begin{proof}
Note that $K_{s} \vee \overline{K}_{n-s}$ is $F_{s}$-minor-free. Then by Lemma \ref{lem::2.3} we obtain
\begin{align*}
\rho_{\alpha}^{*}\geq \rho_{\alpha}(K_{s} \vee \overline{K}_{n-s}) &\geq \max \left\lbrace  \alpha(n-1)+(1-\alpha)(s-1), \alpha n+\frac{2 s-1-(2 s+1) \alpha}{2 \alpha} \right\rbrace\\
&\geq \max \left\lbrace  \alpha(n-1), \alpha n+\frac{2 s-1-(2 s+1) \alpha}{2 \alpha} \right\rbrace.
\end{align*}
\end{proof}
	
Now, take an arbitrary vertex $u^{*} \in V (G^{*})$ with 
$$x_{u^{*}} =\max\{x_{u} : u \in V (G^{*})\}= 1.$$ 
Let 
$$L=\left\{u \in V(G^{*}): x_{u}>\epsilon\right\} \quad \text{and} \quad S=\left\{u \in V(G^{*}): x_{u} \leq \epsilon\right\},$$
where $\epsilon$ is a small constant which will be chosen later. Clearly, $V(G^{*})=L \cup S$.
\begin{clm}\label{clm::3.2}
There exists a constant $C_{1}$ such that $2 e(S) \leq 2 e(G^{*}) \leq C_{1} n.$
\end{clm}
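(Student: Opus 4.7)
The plan is to apply Mader's minor-density theorem (Lemma \ref{lem::2.5}) directly. Since $G^{*}$ is $F_s$-minor-free by assumption, Lemma \ref{lem::2.5} supplies a constant $C$ depending only on $F_s$ (hence only on $s$) such that $e(G^{*}) \leq C n$; setting $C_1 = 2C$ immediately gives $2 e(G^{*}) \leq C_1 n$. The first inequality $2e(S) \leq 2 e(G^{*})$ is trivial from $S \subseteq V(G^{*})$, which yields $E(S) \subseteq E(G^{*})$ and therefore $e(S) \leq e(G^{*})$.

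There is essentially no technical obstacle here: the entire content of the claim sits in Mader's theorem, and both halves of the displayed chain are essentially free. The reason this bookkeeping statement is isolated as its own claim is that it will be combined later with the fact that $x_u \leq \epsilon$ for every $u \in S$ to show that the edges contained in $S$ contribute only $O(\epsilon n)$ to the eigenvalue equation for $\rho_{\alpha}^{*}$, while the edges between $S$ and $L$ and those inside $L$ are controlled by other means. Thus all that matters for the argument is the linear-in-$n$ growth, which is exactly the bound that Lemma \ref{lem::2.5} delivers with a constant $C_1 = C_1(s)$ independent of $n$.
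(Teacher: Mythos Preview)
Your proposal is correct and matches the paper's proof exactly: the paper simply says the claim follows by taking $C_{1}:=2C$ in Lemma~\ref{lem::2.5}, which is precisely your argument.
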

\begin{proof}
It can be obtained directly by taking $C_{1}:=2 C$ in Lemma \ref{lem::2.5}.
\end{proof}
	
\begin{clm}\label{clm::3.3}
$e(L, S) \leq(s+\epsilon) n$, $2 e(L) \leq \epsilon n$ and $|L|\leq \epsilon n$.
\end{clm}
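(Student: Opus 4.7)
The plan is to establish the three bounds in sequence, first controlling $|L|$, from which the bound on $e(L)$ follows almost immediately, and then bounding $e(L,S)$ by the Turán-type estimate for $F_s$-minor-free bipartite graphs already recorded as Lemma \ref{lem::2.1}. The two main ingredients, already in hand, are the lower bound $\rho_{\alpha}^{*} \geq \alpha(n-1)$ from Claim \ref{clm::3.1} and the linear edge bound $2e(G^{*}) \leq C_1 n$ from Claim \ref{clm::3.2}.

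To bound $|L|$, I would apply the eigenvalue equation at each $u \in L$. Since $x_v \leq 1$ for every $v$, we have $\rho_{\alpha}^{*} x_u = \alpha d(u) x_u + (1-\alpha) \sum_{v \sim u} x_v \leq (\alpha x_u + (1-\alpha))\, d(u) \leq d(u)$, so $d(u) \geq \rho_{\alpha}^{*} x_u > \rho_{\alpha}^{*} \epsilon \geq \alpha(n-1)\epsilon$. Summing over $u \in L$ and comparing with $2e(G^{*}) \leq C_1 n$ from Claim \ref{clm::3.2} yields $\alpha(n-1)\epsilon\,|L| \leq C_1 n$, which bounds $|L|$ by a constant of order $C_1/(\alpha \epsilon)$ for $n$ sufficiently large. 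In particular $|L| \leq \epsilon n$ once $n$ is large enough.

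The bound $2e(L) \leq |L|(|L|-1)$ is then just a constant, hence $\leq \epsilon n$ for $n$ large. For $e(L,S)$, I would let $H$ denote the bipartite subgraph of $G^{*}$ with parts $L$ and $S$ (containing precisely the $G^{*}$-edges between them). Since $H$ is a subgraph of $G^{*}$, it is $F_s$-minor-free, and applying Lemma \ref{lem::2.1} with $a = |L|$ yields $e(L,S) = e(H) \leq C|L| + sn$; as $|L|$ is bounded by a constant, $C|L| \leq \epsilon n$ for $n$ large, giving $e(L,S) \leq (s+\epsilon)n$.

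The main obstacle is the first step: the argument hinges on using the lower bound $\rho_{\alpha}^{*} \geq \alpha(n-1)$ to force each vertex of $L$ to have degree growing linearly in $n$, and then combining this with the Mader-type bound $e(G^{*}) = O(n)$ to squeeze $|L|$ down to a bounded quantity. Once that degree lower bound is established, the remaining two inequalities are essentially immediate consequences of Claim \ref{clm::3.2} and Lemma \ref{lem::2.1}.
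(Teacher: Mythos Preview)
Your proposal is correct and follows essentially the same strategy as the paper: use the eigenvector equation to force each $u\in L$ to have degree at least a constant times $\rho_\alpha^*\epsilon$, compare with $2e(G^*)\le C_1n$ to bound $|L|$ by a constant, and then invoke Lemma~\ref{lem::2.1} for $e(L,S)$. The only cosmetic differences are that the paper derives the slightly sharper lower bound $d_{G^*}(u)>\rho_\alpha^*\epsilon/(1-\alpha+\alpha\epsilon)$ and bounds $2e(L)$ via Lemma~\ref{lem::2.5} rather than the trivial $|L|(|L|-1)$; neither change affects the argument.
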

\begin{proof}
For any vertex $u \in L$, by $A_{\alpha}(G^{*})\mathbf{x}=\rho_{\alpha}^{*}\mathbf{x}$ we have
$$
\left(\rho_{\alpha}^{*}-\alpha d_{G^{*}}(u)\right) \epsilon<\left(\rho_{\alpha}^{*}-\alpha d_{G^{*}}(u)\right) x_{u}=(1-\alpha) \sum_{u v \in E(G^{*})} x_{v} \leq(1-\alpha) d_{G^{*}}(u),
$$
which yields that
$d_{G^{*}}(u)>\frac{\rho_{\alpha}^{*}\epsilon}{1-\alpha+\alpha \epsilon}.$
Hence, we obtain
\begin{equation}\label{equ::1.1}
2 e(G^{*})=\sum_{u\in V(G^{*})} d_{G^{*}}(u) \geq \sum_{u\in L} d_{G^{*}}(u) \geq \frac{|L| \rho_{\alpha}^{*} \epsilon}{1-\alpha+\alpha \epsilon}.
\end{equation} 
Since $n$ is sufficiently large, there is a constant $C_2$ such that
$$
C_2 \geq \frac{2 \alpha C_{1}}{2 \alpha^{2}+\frac{2 s-1-(2s+1) \alpha}{n}}.
$$
By (\ref{equ::1.1}) and Claims \ref{clm::3.1}-\ref{clm::3.2}, we get that
\begin{align}
|L| &\leq \frac{2 e(G^{*})}{\rho_{\alpha}^{*}} \cdot \frac{1-\alpha+\alpha \epsilon}{\epsilon} \leq \frac{C_{1} n}{\alpha n+\frac{2 s-1-(2 s+1) \alpha}{2 \alpha}} \cdot \frac{1-\alpha+\alpha \epsilon}{\epsilon} \nonumber \\
& =\frac{2 \alpha C_{1}}{2 \alpha^{2}+\frac{2 s-1-(2 s+1) \alpha}{n}} \cdot \frac{1-\alpha+\alpha \epsilon}{\epsilon} \leq \frac{C_2(1-\alpha+\alpha \epsilon)}{\epsilon}.     \label{equ::1.4}
\end{align}
This indicates that $|L|\leq \epsilon n$ as long as $n$ is sufficiently large such that $n \geq \frac{C_2(1-\alpha+\alpha \epsilon)}{\epsilon^{2}}$. By Lemma \ref{lem::2.1} and (\ref{equ::1.4}), there is a constant $C_3$ depending only on $s$ such that
$$
e(L, S) \leq C_3|L|+s n \leq \frac{C_3C_2(1-\alpha+\alpha \epsilon)}{\epsilon}+s n \leq(s+\epsilon) n,
$$
where the last inequality holds as long as $n$ is sufficiently large such that $n \geq \frac{C_3 C_2(1-\alpha+\alpha \epsilon)}{\epsilon^{2}}$.
Furthermore, by Lemma \ref{lem::2.5} and (\ref{equ::1.4}), we obtain
$$
2e(L) \leq C_{1}|L| \leq \frac{C_{1} C_2(1-\alpha+\alpha \epsilon)}{\epsilon} \leq \epsilon n,
$$
where the last inequality holds as long as $n$ is sufficiently large such that $n \geq \frac{C_{1} C_2(1-\alpha+\alpha \epsilon)}{\epsilon^{2}}$.
\end{proof}
	
\begin{clm}\label{clm::3.4}
For any vertex $u \in L$, there exists a constant $C_{4}$ ($C_{4} \geq1$ is possible) such that
$$
d_{G^{*}}(u) \geq\left( 1-C_{4}(1-x_{u}+\epsilon)\right) n.
$$
\end{clm}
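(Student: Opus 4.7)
The plan is to derive a lower bound on $d_{G^*}(u)$ directly from the eigenvector equation by splitting the neighbor-sum along the partition $V(G^*)=L\cup S$ set up in Claim~\ref{clm::3.3}. Since the maximum entry of $\mathbf{x}$ is $1$ and $x_v\le\epsilon$ for every $v\in S$, together with $|L|\le\epsilon n$, I would first bound
$$\sum_{v\sim u}x_v\;\le\;d_L(u)+\epsilon\,d_S(u)\;\le\;|L|+\epsilon\,d_{G^*}(u)\;\le\;\epsilon n+\epsilon\,d_{G^*}(u).$$
Substituting this into $\rho_\alpha^* x_u=\alpha d_{G^*}(u)x_u+(1-\alpha)\sum_{v\sim u}x_v$ and solving for $d_{G^*}(u)$ yields
$$d_{G^*}(u)\;\ge\;\frac{\rho_\alpha^* x_u-(1-\alpha)\epsilon n}{\alpha x_u+(1-\alpha)\epsilon}.$$

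Next, using the bound $\rho_\alpha^*\ge\alpha(n-1)$ from Claim~\ref{clm::3.1}, a short computation gives
$$n-d_{G^*}(u)\;\le\;\frac{(n\alpha-\rho_\alpha^*)x_u+2(1-\alpha)\epsilon n}{\alpha x_u+(1-\alpha)\epsilon}\;\le\;\frac{\alpha x_u+2(1-\alpha)\epsilon n}{\alpha x_u+(1-\alpha)\epsilon}.$$
To conclude, I would verify that this upper bound is at most $C_4(1-x_u+\epsilon)n$ for a suitable constant $C_4=C_4(\alpha)$. The key algebraic fact is that on $(\epsilon,1]$ the product $(1-x_u+\epsilon)(\alpha x_u+(1-\alpha)\epsilon)$ attains its minimum at the endpoint $x_u=1$, where it equals $\alpha\epsilon+(1-\alpha)\epsilon^2\ge\alpha\epsilon$; cross-multiplying against the previous display then produces the claimed inequality, with any $C_4\ge 2(1-\alpha)/\alpha+1$ sufficing once $n$ is large enough to absorb the additive constants.

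The main obstacle is that the naive estimate $\sum_{v\sim u}x_v\le d_{G^*}(u)$ only yields $d_{G^*}(u)\ge\rho_\alpha^* x_u/(\alpha x_u+1-\alpha)$, which is of order $\alpha n$ rather than $n$; the claim becomes tractable only after sharpening the neighbor-sum estimate to $\epsilon n+\epsilon\,d_{G^*}(u)$ by exploiting the bound $|L|\le\epsilon n$ from Claim~\ref{clm::3.3}. It is worth noting that the claim is quasi-vacuous when $x_u$ is near $\epsilon$ (so that $(1-x_u+\epsilon)n$ is of order $n$ and the allowance $C_4\ge 1$ permits $(1-C_4(1-x_u+\epsilon))n\le 0$); the substantive regime is $x_u$ close to $1$, where the calculation above forces $d_{G^*}(u)\ge(1-O(\epsilon))n$ and will later drive the structural analysis of $G^*$.
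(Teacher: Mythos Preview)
Your argument is correct, and it takes a genuinely different route from the paper's proof. The paper does \emph{not} split the neighbor-sum along $L\cup S$. Instead, it first establishes a global bound
\[
\sum_{v\in V(G^*)} x_v \;\le\; \frac{\bigl((1+C_1)\epsilon+(1+\epsilon)(s+\epsilon)\bigr)n}{\rho_\alpha^*}
\]
by summing the eigenvector identity over \emph{all} vertices and applying the edge estimates $2e(L)\le\epsilon n$, $2e(S)\le C_1 n$, and $e(L,S)\le(s+\epsilon)n$ from Claims~\ref{clm::3.2}--\ref{clm::3.3}. It then uses the crude inequality $\sum_{v\sim u}x_v\le\sum_{v\in V(G^*)}x_v$ and solves the eigenvector equation for $d_{G^*}(u)$ via $d_{G^*}(u)\ge \rho_\alpha^*/\alpha - (1-\alpha)\sum_v x_v/(\alpha x_u)$.

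Your approach is shorter and more self-contained: of the outputs of Claim~\ref{clm::3.3} you invoke only $|L|\le\epsilon n$, and you never touch the Mader constant $C_1$ or the bipartite edge bound from Lemma~\ref{lem::2.1}. You also produce an explicit constant $C_4=2(1-\alpha)/\alpha+1$ depending on $\alpha$ alone, whereas the paper's $C_4$ implicitly involves $C_1$ and $s$. The paper's detour through $\sum_v x_v$ buys nothing extra here (that global sum is not reused later), so your argument is a strict simplification for this claim.
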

\begin{proof}
By $A_{\alpha}(G^{*})\mathbf{x}=\rho_{\alpha}^{*}\mathbf{x}$ and Claims \ref{clm::3.2}-\ref{clm::3.3}, we can see that
$$
\begin{aligned}
\rho_{\alpha}^{*} \sum_{v \in V(G^{*})} x_{v} 
= & \sum_{v \in V(G^{*})} \rho_{\alpha}^{*} x_{v}=\sum_{v \in V(G^{*})}\left(\alpha d_{G^{*}}(v) x_{v}+(1-\alpha) \sum_{v z \in E(G^{*})} x_{z}\right) \\	
= & \alpha \sum_{v \in V(G^{*})} d_{G^{*}}(v) x_{v}+(1-\alpha) \sum_{v z \in E(G^{*})}\left(x_{v}+x_{z}\right) \\
= & \alpha\left(\sum_{v \in L} d_{G^{*}}(v) x_{v}+\sum_{v \in S} d_{G^{*}}(v) x_{v}\right)+(1-\alpha)\left(\sum_{v z \in E(L)}\left(x_{v}+x_{z}\right)+\right. \\
& \left.\sum_{v z \in E(L, S)}\left(x_{v}+x_{z}\right)+\sum_{v z \in E(S)}\left(x_{v}+x_{z}\right)\right) \\
\leq & \alpha(2 e(L)+e(L, S))+\alpha \epsilon(2 e(S)+e(L, S))+(1-\alpha)(2 e(L)+ \\
& (1+\epsilon) e(L, S)+2 \epsilon e(S)) \\
= & 2 e(L)+2 \epsilon e(S)+(1+\epsilon) e(L, S)\\
\leq& \epsilon n+\epsilon C_{1} n+(1+\epsilon)(s+\epsilon) n\\
=&\left(\left(1+C_{1}\right) \epsilon+(1+\epsilon)(s+\epsilon)\right) n,
\end{aligned}
$$
which implies that
\begin{equation*}
\begin{aligned}
\sum_{v \in V(G^{*})} x_{v}  \leq \frac{\left(\left(1+C_{1}\right) \epsilon+(1+\epsilon)(s+\epsilon)\right) n}{\rho_{\alpha}^{*}}.
\end{aligned}
\end{equation*}
For any vertex $u \in L$,  it is easy to see that
\begin{equation*}
\left(\rho_{\alpha}^{*}-\alpha d_{G^{*}}(u)\right) x_{u}=(1-\alpha) \sum_{u v \in E(G^{*})} x_{v} \leq(1-\alpha) \sum_{v \in V(G^{*})} x_{v}.
\end{equation*}
Combining with Claim \ref{clm::3.1}, there exists a constant $C_4$ ($C_{4} \geq1$ is possible) such that
$$
\begin{aligned}
d_{G^{*}}(u) & \geq \frac{\rho_{\alpha}^{*}}{\alpha}-\frac{(1-\alpha) \sum_{v \in V(G^{*})} x_{v}}{\alpha x_{u}} \\
& \geq \frac{\rho_{\alpha}^{*}}{\alpha}-\frac{(1-\alpha)\left(\left(1+C_{1}\right) \epsilon+(1+\epsilon)(s+\epsilon)\right) n}{\rho_{\alpha}^{*} \alpha x_{u}} \\
&\geq n-1-\frac{(1-\alpha)\left(\left(1+C_{1}\right) \epsilon+(1+\epsilon)(s+\epsilon)\right)n}{\alpha^{2}\left(n-1\right) x_{u}}\\
&\geq n-\frac{\alpha^{2} x_{u}+(1-\alpha)\left(\left(1+C_{1}\right) \epsilon+(1+\epsilon)(s+\epsilon)\right)}{\alpha^{2} x_{u}}\cdot\frac{n}{n-1}\\
&\geq \left(1-C_{4}\left(1-x_{u}+\epsilon\right)\right)n,
\end{aligned}
$$
where the last inequality holds as long as $n$ is sufficiently large such that $$
n \geq \frac{\alpha^{2} x_{u}+(1-\alpha)\left(\left(1+C_{1}\right) \epsilon+(1+\epsilon)(s+\epsilon)\right)}{\alpha^{2} x_{u}C_{4}\left(1-x_{u}+\epsilon\right)}+1.
$$
\end{proof}	
	
\begin{clm}\label{clm::3.4-}
$\frac{\rho_{\alpha}^{*}\left(\rho_{\alpha}^{*}+1-\alpha n \right)}{1-\alpha} \geq(n-s)s.$
\end{clm}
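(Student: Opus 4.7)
The strategy is a direct comparison with the conjectured extremal graph $K_{s}\vee\overline{K}_{n-s}$. Set $\rho_{0}:=\rho_{\alpha}(K_{s}\vee\overline{K}_{n-s})$. Since $K_{s}\vee\overline{K}_{n-s}$ is $F_{s}$-minor-free, the extremality of $G^{*}$ forces $\rho_{\alpha}^{*}\geq \rho_{0}$ (this inequality was already used at the start of the proof of Claim~\ref{clm::3.1}). It therefore suffices to prove the intermediate statement $\rho_{0}(\rho_{0}+1-\alpha n)\geq (1-\alpha)s(n-s)$ and then transfer the bound to $\rho_{\alpha}^{*}$.

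For the intermediate step, I would exploit the equitable partition $\{V(K_{s}),V(\overline{K}_{n-s})\}$ of $K_{s}\vee\overline{K}_{n-s}$: its $A_{\alpha}$-quotient matrix is $2\times 2$ and $\rho_{0}$ is its dominant eigenvalue, so
\[
\bigl(\rho_{0}-\alpha(n-1)-(1-\alpha)(s-1)\bigr)(\rho_{0}-\alpha s) \;=\; (1-\alpha)^{2}s(n-s).
\]
Noting that $-\alpha(n-1)-(1-\alpha)(s-1)=1-\alpha n-s(1-\alpha)$, a direct expansion and regrouping of this identity produces the clean form
\[
\rho_{0}(\rho_{0}+1-\alpha n)-(1-\alpha)s(n-s) \;=\; s\bigl(\rho_{0}-\alpha(n-1)\bigr).
\]
Lemma~\ref{lem::2.3} yields $\rho_{0}\geq \alpha(n-1)$, so the right-hand side is nonnegative and the desired inequality for $\rho_{0}$ follows.

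To transfer to $\rho_{\alpha}^{*}$, note that both $\rho_{0}>0$ and $\rho_{0}+1-\alpha n\geq s(1-\alpha)>0$ by Lemma~\ref{lem::2.3}, while $\rho_{\alpha}^{*}\geq \rho_{0}$. Hence
\[
\rho_{\alpha}^{*}(\rho_{\alpha}^{*}+1-\alpha n)\;\geq\; \rho_{0}(\rho_{0}+1-\alpha n)\;\geq\; (1-\alpha)s(n-s),
\]
and dividing through by $1-\alpha$ yields Claim~\ref{clm::3.4-}.

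The only delicate point is the algebraic manipulation in the second paragraph: one must expand the quotient-matrix characteristic identity and regroup the terms so that the residue is exactly $s(\rho_{0}-\alpha(n-1))$, after which Lemma~\ref{lem::2.3} closes the argument. The surrounding steps are bookkeeping.
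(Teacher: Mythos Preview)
Your proof is correct and follows essentially the same approach as the paper. The paper also computes the characteristic polynomial of the $2\times 2$ quotient matrix for $K_s\vee\overline{K}_{n-s}$, rewrites it as $f(x)-s(x-\alpha(n-1))$ with $f(x)=x^{2}-(\alpha n-1)x-(1-\alpha)(n-s)s$, and uses Lemma~\ref{lem::2.3} to deduce $f(\rho_{0})\geq 0$; the transfer to $\rho_{\alpha}^{*}$ is phrased there as ``$\rho_{0}$ lies at or above the largest root of $f$'' (using $\rho_{0}\geq(\alpha n-1)/2$), which is the same monotonicity argument you give via the positivity of both factors.
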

\begin{proof}
Denote by $G:= K_{s} \vee \overline{K}_{n-s}$. Let $\rho_{\alpha}= \rho_{\alpha}\left(G\right)$ and
$\mathbf{x^{\prime}}$ be a positive eigenvector of $A_{\alpha}(G)$ corresponding to $\rho_{\alpha}$. By symmetry and the Perron-Frobenius theorem, all vertices of subgraphs $K_{s}$ or $\overline{K}_{n-s}$ in $G:= K_{s} \vee \overline{K}_{n-s}$ have the same eigenvector components respectively, which are denoted by $x_{1}^{\prime}$ and $x_{2}^{\prime}$, respectively.  By $A_{\alpha}(G)\mathbf{x^{\prime}}=\rho_{\alpha}\mathbf{x^{\prime}}$, we can see that
$$
\begin{aligned}
(\rho_{\alpha}-\alpha(n-1)-(1-\alpha)(s-1)) x_{1}^{\prime} &=(1-\alpha)(n-s) x_{2}^{\prime}, \\
(\rho_{\alpha}-\alpha s)x_{2}^{\prime} &=(1-\alpha)s x_{1}^{\prime}.
\end{aligned}
$$
Then $\rho_{\alpha}$ is the largest root of $g(x) = 0$, where
$$
g(x) =x^2 - (\alpha n + s - 1)x + (2 \alpha n - \alpha s- \alpha - n + s)s.
$$
Moreover, let
$$
f(x) =x^2 - (\alpha n - 1)x -(1-\alpha)(n-s)s.
$$
Clearly,
$$g(x)=f(x)-s(x-\alpha(n-1)).$$
By Lemma \ref{lem::2.3} we get that
$$\rho_{\alpha} \geq \alpha(n-1)+(1-\alpha)(s-1) \geq \alpha(n-1) \geq \alpha n-1.$$ 
Thus we obtain
$\rho_{\alpha} \geq\frac{\alpha n - 1}{2}$ and
$$
f(\rho_{\alpha})=s(\rho_{\alpha}-\alpha(n-1))\geq0,
$$
which imply that $\rho_{\alpha}$ is no less than the largest root of $f(x) = 0$. Furthermore, since $G$ is $F_{s}$-minor-free, we have $\rho_{\alpha}^* \geq \rho_{\alpha}$. Therefore, $\rho_{\alpha}^*$ is also no less than the largest root of $f(x) = 0$, that is,
$$f(\rho_{\alpha}^*) =\rho_{\alpha}^*\left( \rho_{\alpha}^*+1- \alpha n\right) -(1-\alpha)(n-s)s \geq 0,$$ 
the result follows.
\end{proof}	
	
\begin{clm}\label{clm::3.5}
There exists a constant $C_5$ and $s$ distinct vertices $v_1, v_2, \dots, v_s \in L$ such that $x_{v_i} \geq 1-C_5\epsilon$ and $d_{G^*}(v_i)\geq  (1-C_5\epsilon)n$ for each $i\in \{1, 2, \dots, s\}$. 	
\end{clm}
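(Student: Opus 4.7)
The plan is to construct the $s$ vertices iteratively, starting with $v_1 := u^{*}$. Since $x_{v_1}=1$, the bound $x_{v_1}\ge 1-C_5\epsilon$ is automatic, and Claim~\ref{clm::3.4} applied at $v_1$ (where $1-x_{v_1}+\epsilon = \epsilon$) gives $d_{G^*}(v_1)\ge (1-C_4\epsilon)n$, so $v_1$ satisfies the required conditions as soon as $C_5 \ge C_4$. The heart of the argument is the inductive step: from $v_1,\ldots,v_k$ with $k<s$ produce a new heavy vertex $v_{k+1}$.

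For the inductive step, assume $v_1,\ldots,v_k \in L$ with $k<s$ all satisfy the desired bounds. By Lemma~\ref{lem::2.6} applied to $N_{G^*}(v_1),\ldots,N_{G^*}(v_k)$, these vertices share a common neighborhood
\[
W_k \;=\; \bigcap_{i=1}^{k} N_{G^*}(v_i)
\]
of size at least $k(1-C_5\epsilon)n-(k-1)n=(1-kC_5\epsilon)n$, so $G^*$ contains $K_{k,(1-kC_5\epsilon)n}$ with $\{v_1,\ldots,v_k\}$ on one side. The task is then to locate $v_{k+1}\in V(G^*)\setminus\{v_1,\ldots,v_k\}$ with $x_{v_{k+1}}\ge 1-C_5\epsilon$; once found, Claim~\ref{clm::3.4} at $v_{k+1}$ automatically forces $d_{G^*}(v_{k+1})\ge (1-C_5\epsilon)n$ after slightly enlarging $C_5$ if necessary.

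To find $v_{k+1}$ we suppose for contradiction that every vertex $v\notin\{v_1,\ldots,v_k\}$ satisfies $x_v<1-C_5\epsilon$, and we derive an upper bound on $\rho_\alpha^*$ that contradicts the lower bounds coming from Claim~\ref{clm::3.1} and Claim~\ref{clm::3.4-}. Writing the eigenvector equation at $v_1$ as
\[
\rho_\alpha^{*} \;=\; \alpha d_{G^*}(v_1)+(1-\alpha)\sum_{v\sim v_1}x_v,
\]
I split $N_{G^*}(v_1)$ into $\{v_2,\ldots,v_k\}\cap N(v_1)$ (contributing at most $k-1$) and the remaining neighbors (each with $x_v<1-C_5\epsilon$), and control the latter using the working bound $\sum_{v\in V(G^*)}x_v\le s/\alpha+O(\epsilon)$ that is already extracted from the computation inside the proof of Claim~\ref{clm::3.4}. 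Together with $d_{G^*}(v_1)\le n-1$ and the quadratic estimate $\rho_\alpha^{*}(\rho_\alpha^{*}+1-\alpha n)\ge(1-\alpha)(n-s)s$ from Claim~\ref{clm::3.4-}, this yields an upper bound on $\rho_\alpha^{*}$ which, when $k<s$, $\epsilon$ is small and $n$ is large, is strictly smaller than $\rho_\alpha(K_{s}\vee\overline{K}_{n-s})$; this contradicts $\rho_\alpha^{*}\ge\rho_\alpha(K_{s}\vee\overline{K}_{n-s})$ from Claim~\ref{clm::3.1}.

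The main obstacle will be choosing one single universal constant $C_5$ that works through all $s$ iterations, since at the $k$th step the common-neighborhood estimate loses a $kC_5\epsilon$ factor by Lemma~\ref{lem::2.6} and the degree estimate coming from Claim~\ref{clm::3.4} loses an additional $C_4$ factor each time it is invoked. This is handled by first fixing $s$ and the constants $C_1,C_2,C_3,C_4$ from the previous claims, and only then choosing $C_5$ large enough (and $\epsilon$ small enough, $n$ large enough) so that all $O(kC_5\epsilon)$ and $O(\epsilon)$ error terms are dominated by the gap $\rho_\alpha(K_{s}\vee\overline{K}_{n-s})-\rho_\alpha(K_{k}\vee\overline{K}_{n-k})$ for every $k<s$.
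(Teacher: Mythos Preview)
Your inductive framework (start with $v_1=u^*$ and find $v_{k+1}$ at each step) matches the paper, but the mechanism you propose for producing $v_{k+1}$ does not work. You plan to derive an upper bound on $\rho_\alpha^*$ from the \emph{first-order} eigenvector equation at $v_1$,
\[
\rho_\alpha^*=\alpha d_{G^*}(v_1)+(1-\alpha)\sum_{v\sim v_1}x_v,
\]
together with $d_{G^*}(v_1)\le n-1$ and $\sum_{v\in V(G^*)}x_v\le s/\alpha+O(\epsilon)$. But this gives at best
\[
\rho_\alpha^*\le \alpha(n-1)+(1-\alpha)\,\frac{s}{\alpha}+O(\epsilon),
\]
and a direct comparison with Claim~\ref{clm::3.1} shows this is \emph{larger} than $\rho_\alpha(K_s\vee\overline{K}_{n-s})$ for every $0<\alpha<1$; there is no contradiction. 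Your hypothesis ``$x_v<1-C_5\epsilon$ for $v\notin\{v_1,\dots,v_k\}$'' is never actually exploited, since the bound $\sum_v x_v\le s/\alpha+O(\epsilon)$ holds regardless of it, and bounding each remaining neighbor by $1-C_5\epsilon$ only yields $\sum_{v\sim v_1}x_v\le (k-1)+(1-C_5\epsilon)(n-1)$, which is far too large. Finally, Claim~\ref{clm::3.4-} is a \emph{lower} bound on $\rho_\alpha^*$, so invoking it cannot tighten an upper bound on $\rho_\alpha^*$.

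What is missing is the second-order step that the paper carries out: iterating the eigenvector equation once more at $u^*$ to obtain
\[
\sum_{uv\in E(G^*)}(x_u+x_v)\;\ge\;\frac{\rho_\alpha^*(\rho_\alpha^*+1-\alpha n)}{1-\alpha}\;\ge\;(n-s)s,
\]
where the last inequality is exactly Claim~\ref{clm::3.4-}. This lower bound on $\sum_v d_{G^*}(v)x_v$ is then combined with the upper bounds on $e(L)$, $e(S)$, $e(L,S)$ from Claims~\ref{clm::3.2}--\ref{clm::3.3} to force
\[
\sum_{\substack{uv\in E(L\setminus D,S)\\ u\in L\setminus D}} x_u \;\ge\; (s-d-O(\epsilon))n,
\]
while $e(L\setminus D,S)\le (s-d+O(\epsilon))n$. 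Averaging over these edges then produces a vertex $v_{d+1}\in L\setminus D$ with $x_{v_{d+1}}\ge 1-O(\epsilon)$. The constants $c_d$ genuinely grow with $d$ (recursively, $c_{d+1}=C_4(C_1+2s+3)(c_d+1)$), and $C_5$ is taken to be $c_s$; you cannot fix a single $C_5$ in advance and use it uniformly inside the induction as your last paragraph suggests.
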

\begin{proof}	
We will prove this result by using induction. Since $x_{u^*}=1$, we may first set $v_1=u^*$, by Claim \ref{clm::3.4} we have $d_{G^*}(v_1) \geq (1-c_1 \epsilon)n$, where $c_1=C_4$, as desired. Now, suppose that there is a constant $c_d$ and $d$ ($1 \leq d\leq s-1$) distinct vertices $v_1, v_2, \dots, v_d \in L$ such that $x_{v_i} \geq 1-c_d\epsilon$ and $d_{G^*}(v_i)\geq  (1-c_d\epsilon)n$ for each $i\in \{1, 2, \dots, d\}$. We next show that there exists a constant $c_{d+1} > c_d$ and a vertex $v_{d+1} \in L \backslash \{v_1, v_2, \dots, v_d\}$ such that $x_{v_{d+1}} \geq 1-c_{d+1}\epsilon$ and $d_{G^{*}}(v_{d+1}) \geq\left(1-c_{d+1}\epsilon\right)n$.
		
Let $D=\{v_1, v_2, \dots, v_d\} \subseteq L$.
By $A_{\alpha}(G^{*})\mathbf{x}=\rho_{\alpha}^{*}\mathbf{x}$ and $x_{u^{*}} =1$, we have
\begin{align*}
&\rho_{\alpha}^{*}\left(\rho_{\alpha}^{*}-\alpha d_{G^{*}}(u^{*})\right)\\
=&\rho_{\alpha}^{*}\left( \left(\rho_{\alpha}^{*}-\alpha d_{G^{*}}(u^{*})\right) x_{u^{*}}\right) =\rho_{\alpha}^{*}\left( (1-\alpha) \sum_{v u^{*} \in E(G^{*})} x_{v}\right)=(1-\alpha) \sum_{v u^{*} \in E(G^{*})} \rho_{\alpha}^{*}x_{v}\\
=&(1-\alpha) \sum_{v u^{*} \in E(G^{*})} \left( \alpha d_{G^{*}}(v) x_v +(1-\alpha) \sum_{uv \in E(G^{*})} x_u \right)\\
=&(1-\alpha) \sum_{v u^{*} \in E(G^{*})} \alpha d_{G^{*}}(v) x_v+(1-\alpha)^{2} \sum_{v u^{*} \in E(G^{*})}\sum_{uv \in E(G^{*})} x_u\\
\leq & (1-\alpha)\left(\sum_{v \in V(G^{*})} \alpha d_{G^{*}}(v) x_{v}-\alpha d_{G^{*}}(u^{*})\right)+(1-\alpha)^{2} \left( \sum_{u v \in E(G^{*})}\left(x_{u}+x_{v}\right)-\sum_{v u^{*} \in E(G^{*})} x_{v}\right)  \\
= & \alpha(1-\alpha) \sum_{u v \in E(G^{*})}\left(x_{u}+x_{v}\right)-\alpha(1-\alpha) d_{G^{*}}(u^{*})+ (1-\alpha)^{2} \sum_{u v \in E(G^{*})}\left(x_{u}+x_{v}\right)-\\
& (1-\alpha)\left(\rho_{\alpha}^{*}-\alpha d_{G^{*}}(u^{*})\right) \\
= & (1-\alpha) \sum_{u v \in E(G^{*})}\left(x_{u}+x_{v}\right)-(1-\alpha) \rho_{\alpha}^{*}.
\end{align*}
Note that $d_{G^{*}}(u^{*})\leq n-1$. Combining with Claim \ref{clm::3.4-} we obtain $$
\sum_{u v \in E(G^{*})}\left(x_{u}+x_{v}\right) \geq \frac{\rho_{\alpha}^{*}\left(\rho_{\alpha}^{*}+1-\alpha-\alpha d_{G^{*}}(u^{*})\right)}{1-\alpha} \geq\frac{\rho_{\alpha}^{*}\left(\rho_{\alpha}^{*}+1-\alpha n\right)}{1-\alpha} \geq (n-s)s.
$$
On the other hand, recall that $D \subseteq L$, $V(G^{*})=L \cup S$, $x_u > \epsilon$ for any $u \in L$ and $x_u \leq \epsilon$ for any $u \in S$. Then by Claims \ref{clm::3.2}-\ref{clm::3.3}, we get that
$$
\begin{aligned}
\sum_{u v \in E(G^{*})}\left(x_{u}+x_{v}\right) 
= & \sum_{u v \in E(L, S)}\left(x_{u}+x_{v}\right)+\sum_{u v \in E(S)}\left(x_{u}+x_{v}\right)+\sum_{u v \in E(L)}\left(x_{u}+x_{v}\right) \\
\leq & \sum_{u v \in E(L, S)}\left(x_{u}+x_{v}\right)+2 \epsilon e(S)+2 e(L) \\
\leq & \epsilon e(L, S)+\sum_{\substack{u v \in E(L \backslash D, S) \\
u \in L \backslash D}} x_{u}+\sum_{\substack{u v \in E(D, S) \\
u \in D}} x_{u}+2 \epsilon e(S)+2 e(L)\\
\leq & \epsilon(s+\epsilon) n+\sum_{\substack{u v \in E(L \backslash D, S) \\
u \in L \backslash D}} x_{u}+dn+\epsilon C_{1} n+\epsilon n.
\end{aligned}
$$
Therefore, we have
$$
\begin{aligned}
\sum_{\substack{u v \in E(L \backslash D, S) \\ u \in L \backslash D}} x_{u}
\geq & (n-s)s-\epsilon(s+\epsilon) n-dn- \epsilon C_{1} n-\epsilon n
=  (s-d-\epsilon(C_{1}+\epsilon+s+1)) n-s^{2}\\
\geq & (s-d-\epsilon(C_{1}+\epsilon+s+2)) n,
\end{aligned}
$$
where the last inequality holds as long as $n$ is sufficiently large such that $n \geq \frac{s^2}{\epsilon}$.
Furthermore, by Claim \ref{clm::3.3} and
$$
e(D, S)+e(D, L \backslash D)+2 e(D)=\sum_{v \in D} d_{G^{*}}(v) \geq d\left(1-c_d \epsilon\right)n,
$$
we find that
$$
\begin{aligned}
e(D, S) & \geq d\left(1-c_d \epsilon\right) n-e(D, L \backslash D)-2 e(D)  \geq d\left(1-c_d \epsilon\right) n-d(|L|-d)-d(d-1) \\
& \geq d\left(1-c_d \epsilon\right) n-d(\epsilon n-d)-d(d-1)  =d\left(1-c_d \epsilon-\epsilon\right) n+d >d\left(1-c_d \epsilon-\epsilon\right) n,
\end{aligned}
$$
and so
$$
\begin{aligned}
e(L \backslash D, S)=e(L, S)-e(D, S)<(s+\epsilon) n-d\left(1-c_d \epsilon-\epsilon\right) n=\left( s+\epsilon-d\left(1-c_d \epsilon-\epsilon\right)\right)  n.
\end{aligned}
$$
Let 
$$
\varphi(x)=\frac{s-x-\epsilon\left(C_{1}+\epsilon+s+2 \right)}{ s+\epsilon-x\left(1-c_d \epsilon-\epsilon\right)}.
$$
Then
$$ 
\begin{aligned}
\varphi^{\prime}(x)=\dfrac{-(s+\epsilon)+(1-c_d \epsilon-\epsilon)(s-\epsilon\left(C_{1}+\epsilon+s+2 \right))}{ (s+\epsilon-x\left(1-c_d \epsilon-\epsilon\right))^{2}}<0,
\end{aligned}
$$
which implies that $\varphi(x)$ is decreasing with respect to $1 \leq x \leq s -1$.
Thus, by averaging, there exists a vertex $v_{d+1} \in L \backslash D$ such that
$$
\begin{aligned}
x_{v_{d+1}} &\geq \frac{\sum_{\substack{u v \in E(L \backslash D, S) \\ u \in L \backslash D}} x_{u}}{e(L \backslash D, S)}
\geq \varphi(d)\\ &\geq \varphi(s-1)=\frac{1-\epsilon\left(C_{1}+\epsilon+s+2 \right)}{ 1+\epsilon+(s-1)\left(c_d \epsilon+\epsilon\right)} = 1-\frac{\epsilon\left(C_{1}+\epsilon+s+3\right)+(s-1)\left(c_d \epsilon+\epsilon\right)}{1+\epsilon+(s-1)\left(c_d \epsilon+\epsilon\right)}\\
&\geq 1-\frac{\left( \epsilon+c_d \epsilon\right) \left(C_{1}+s+3\right)+(s-1)\left(c_d \epsilon+\epsilon\right)}{1+\epsilon+(s-1)\left(c_d \epsilon+\epsilon\right)}\geq 1-\left(C_1+2s+2 \right) \left(c_d \epsilon+\epsilon\right).
\end{aligned}
$$
Then by Claim \ref{clm::3.4} we have
$$
d_{G^{*}}\left(v_{d+1}\right) \geq  \left(1-C_{4}\left( \left(C_1+2s+2 \right) \left(c_d \epsilon+\epsilon\right)+\epsilon\right) \right)n \geq \left(1-C_{4} \left(C_1+2s+3 \right) \left(c_d +1\right)\epsilon \right)n.
$$
Denote by $c_{d+1} :=C_{4}\left(C_1+2s+3 \right) \left(c_d +1\right)$. Then
$d_{G^{*}}(v_{d+1}) \geq (1-c_{d+1}\epsilon)n$. Setting $C_4 \geq1$ in Claim \ref{clm::3.4}, we obtain $c_{d+1} > c_d$. 
It follows that $x_{v_{i}} \geq 1-c_{d+1}\epsilon$ and $d_{G^{*}}(v_{i}) \geq\left(1-c_{d+1}\epsilon\right)n$ for each $i\in \{1, 2, \dots, d+1\}$, as desired.
\end{proof}	
	
Let the $s$ distinct vertices $v_1, v_2, \dots, v_s \in L$ be defined in Claim \ref{clm::3.5}.
Denote by $A=\left\{v_{1}, v_{2}, \ldots, v_{s}\right\}$, $B= \bigcap_{i=1}^{s} N_{G^*}\left(v_{i}\right)$ and $R=V(G^*) \backslash(A \cup B)$. Then by Lemma \ref{lem::2.6} and Claim \ref{clm::3.5}, we obtain
$$
\begin{aligned}
|B| \geq \sum_{i=1}^{s}d_{G^*}(v_{i})-(s-1)\left| \bigcup_{i=1}^{s} N_{G^*}(v_{i})\right|  \geq s(1-C_5\epsilon) n-(s-1) n=(1-C_5\epsilon s ) n,
\end{aligned}
$$
and so
\begin{equation}\label{equ::1.5}
|R|=n-|A|-|B| \leq C_5\epsilon s n.
\end{equation}
\begin{clm}\label{clm::3.6}
$G^*[A]$ is a clique. Moreover,  $B$ is an independent set.
\end{clm}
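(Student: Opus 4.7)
The overall strategy is to convert the structural information collected in Claim \ref{clm::3.5} into the two structural statements of Claim \ref{clm::3.6} by invoking Lemma \ref{lem::2.2} together with the extremality of $G^{*}$. Concretely, we have just shown that $|A|=s$ and $|B|\geq (1-C_{5}\epsilon s)n$, so $G^{*}[A,B]$ contains a copy of $K_{s,(1-C_{5}\epsilon s)n}$. Set $\delta:=C_{5}\epsilon s$. By fixing $\epsilon$ small enough at the outset so that $C_{5}\epsilon s<1/2$ and taking $n$ sufficiently large, both Lemma \ref{lem::2.2}(i) (requiring $(1-\delta)n\geq 2s$) and Lemma \ref{lem::2.2}(ii) (requiring $(1-2\delta)n\geq 2s+1$) become applicable to this configuration.

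For the first assertion, I would apply Lemma \ref{lem::2.2}(ii) to deduce that the graph $G'$ obtained from $G^{*}$ by adding all missing edges inside $A$ to make it a clique is still $F_{s}$-minor-free. Because $G^{*}$ is connected by Claim \ref{clm::3.1-} and $G^{*}\subseteq G'$, the supergraph $G'$ is likewise connected. If $G^{*}[A]$ were not already a clique, then $G^{*}$ would be a proper subgraph of the connected graph $G'$, and Lemma \ref{lem::2.1-} would force $\rho_{\alpha}(G')>\rho_{\alpha}(G^{*})=\rho_{\alpha}^{*}$, contradicting the choice of $G^{*}$ as an extremal $F_{s}$-minor-free graph. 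Hence $G^{*}[A]$ must already be a clique.

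For the second assertion, I invoke Lemma \ref{lem::2.2}(i) directly: under the hypothesis above, $G^{*}[B]$ is $P_{2}$-free. Since the paper uses $P_{n}$ to denote the path of order $n$, being ``$P_{2}$-free'' is exactly being ``edge-free'', so $B$ is an independent set.

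I do not expect any significant technical obstacle in this step; the argument is essentially a packaging of the two conclusions of Lemma \ref{lem::2.2} with the extremality of $G^{*}$ and the connectedness from Claim \ref{clm::3.1-}. The only mild care required is the bookkeeping on constants, namely ensuring $C_{5}\epsilon s$ stays bounded away from $1/2$ so that both numerical hypotheses of Lemma \ref{lem::2.2} remain valid for all sufficiently large $n$; this is arranged once and for all when $\epsilon$ is first declared small.
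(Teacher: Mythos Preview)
Your proposal is correct and follows essentially the same approach as the paper: both arguments set up the $K_{s,(1-\delta)n}$ configuration from $A$ and $B$, invoke Lemma~\ref{lem::2.2}(ii) together with Lemma~\ref{lem::2.1-} and extremality to force $G^{*}[A]$ to be a clique, and then invoke Lemma~\ref{lem::2.2}(i) to conclude that $G^{*}[B]$ is $P_{2}$-free, i.e.\ $B$ is independent. Your explicit mention of connectedness (from Claim~\ref{clm::3.1-}) when applying Lemma~\ref{lem::2.1-} is a nice touch that the paper leaves implicit.
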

\begin{proof}
Obviously, $G^*$ contains a complete bipartite subgraph $G^*[A, B]$ with $|A|=s$ and $|B|=(1-\delta) n$, where $\delta \geq C_5\epsilon s$. Furthermore,  $(1-2 \delta) n \geq 2 s+1$ for sufficiently large $n$. Suppose that $G^*[A]$ is not a clique. Let $G^{\prime}$ be the graph obtained from $G^*$ by adding edges to $A$ to make it a clique. Then $G^{\prime}$ is also $F_{s}$-minor-free by Lemma \ref{lem::2.2}(ii). Since $G^*$ is a proper subgraph of $G^{\prime}$, by Lemma \ref{lem::2.1-} we have $\rho_{\alpha}(G^{\prime}) > \rho_{\alpha}^{*}$, a contradiction. Hence, $A$ induces a clique in $G^*$. Moreover, since $(1-\delta) n\geq(1-2 \delta) n \geq 2 s+1> 2s$, we get that $G^*[B]$ is $P_{2}$-free by Lemma \ref{lem::2.2}(i), which implies that $B$ is an independent set.
\end{proof}
	
\begin{clm}\label{clm::3.8}
$x_{v} \leq \frac{\sqrt{2\alpha}}{2(C_{1}+1)}$ for any $v \in V(G^*) \backslash A$.
\end{clm}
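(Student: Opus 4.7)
The plan is to first control the degree of every vertex outside $A$ using the structural information already gathered, and then to feed this degree bound into the eigenvector equation via the normalization $x_u \leq 1$.

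For the degree bound, I would proceed by cases. Claim \ref{clm::3.6} says that $B$ is independent in $G^*$, so every $v \in B$ satisfies $N_{G^*}(v) \subseteq A \cup R$, whence $d_{G^*}(v) \leq s + |R|$. For $v \in R$, the complete bipartite subgraph $G^*[A,B]$ satisfies $(1-\delta)n \geq 2s$ with $\delta = C_5 \epsilon s$ for $n$ large, so Lemma \ref{lem::2.2}(i) yields $d_B(v) \leq 1$; combined with $d_A(v) \leq s$ and $d_{R \setminus \{v\}}(v) \leq |R|-1$, one again obtains $d_{G^*}(v) \leq s + |R|$. Invoking (\ref{equ::1.5}), this gives $d_{G^*}(v) \leq s + C_5 \epsilon s n$ uniformly for every $v \in V(G^*) \setminus A$.

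With this in hand, the eigenvector equation
\[
(\rho_\alpha^* - \alpha d_{G^*}(v)) x_v = (1-\alpha) \sum_{vw \in E(G^*)} x_w \leq (1-\alpha) d_{G^*}(v)
\]
(using $x_w \leq 1$ for every $w$), together with $\rho_\alpha^* \geq \alpha(n-1)$ from Claim \ref{clm::3.1}, gives
\[
x_v \leq \frac{(1-\alpha)\, d_{G^*}(v)}{\rho_\alpha^* - \alpha d_{G^*}(v)} \leq \frac{2(1-\alpha)(s + C_5 \epsilon s n)}{\alpha n}
\]
for $n$ sufficiently large, which is of order $O(\epsilon/\alpha)$. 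A final choice of $\epsilon$, small enough in terms of $\alpha$, $s$, $C_1$, and $C_5$, then makes the right-hand side at most $\sqrt{2\alpha}/(2(C_1+1))$.

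The main obstacle here is not conceptual: the target bound $\sqrt{2\alpha}/(2(C_1+1))$ is an $n$-independent constant and is quite loose compared with the $O(\epsilon)$ estimate that the argument naturally produces. The genuine work is therefore only bookkeeping of constants---arranging the final inequality in exactly the stated form and verifying that the required upper bound on $\epsilon$ is compatible with all constraints already imposed in Claims \ref{clm::3.2}--\ref{clm::3.5}.
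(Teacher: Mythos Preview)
Your argument is correct, but it diverges from the paper's in one substantive way. You bound $\sum_{vw\in E(G^*)} x_w$ crudely by $d_{G^*}(v)$, which yields $x_v = O(\epsilon)$ and then forces you to impose a further smallness condition on $\epsilon$ (in terms of $\alpha,s,C_1,C_5$) to reach the target $\sqrt{2\alpha}/(2(C_1+1))$. The paper instead inserts one extra step: it first shows
\[
\sum_{u\in R} x_u \;\le\; \frac{(C_1+s)|R|}{\alpha(n-1)} \;=\; O\!\left(\frac{|R|}{n}\right)
\]
by summing the eigenvector equation over $R$, and then splits $\sum_{vw} x_w$ into an $(A\cup B)$-part ($\le s$) and an $R$-part (controlled by the display above). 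This gives $x_v = O(1/n)$ rather than $O(\epsilon)$, so the claim follows for \emph{every} admissible $\epsilon$ simply by taking $n$ large---no new constraint on $\epsilon$ is introduced, and the single choice of $\epsilon$ is deferred cleanly to Claim~\ref{clm::3.9}.

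Either route suffices for the claim as stated; yours is shorter, while the paper's keeps the logical structure tidier by not entangling the $\epsilon$-selection with this step. Your remark that the compatibility of the new $\epsilon$-constraint with Claims~\ref{clm::3.2}--\ref{clm::3.5} needs checking is well taken, and indeed those earlier claims only impose conditions of the form ``$n$ large relative to $\epsilon$'' with no lower bound on $\epsilon$, so there is no conflict.
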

\begin{proof}
For any vertex $v \in B$, since $B$ is an independent set from Claim \ref{clm::3.6}, we have $d_{B}(v)=0$, and so
\begin{equation}\label{equ::1.8}
|N_{G^*}(v) \cap(A \cup B)|=d_{A}(v)+d_{B}(v)=s.
\end{equation}
In addition, for any vertex $v\in R$, we see that $d_{A}(v)\leq s-1$ by the definition of $R$, and $d_{B}(v)\leq 1$ by Lemma \ref{lem::2.2}(i), thus
\begin{equation}\label{equ::1.6}
\left|N_{G^*}(v) \cap(A \cup B)\right|=d_{A}(v)+d_{B}(v) \leq s-1+1=s.
\end{equation}
Since $G^*[R]$ is $F_{s}$-minor-free, we have $2 e(R) \leq C_{1}|R|$  by Lemma \ref{lem::2.5}.
Furthermore, by $A_{\alpha}(G^{*})\mathbf{x}=\rho_{\alpha}^{*}\mathbf{x}$ and Claim \ref{clm::3.1}, we can see that
$$
\begin{aligned}
\alpha(n-1) \sum_{u \in R} x_{u} & \leq \rho_{\alpha}^* \sum_{u \in R} x_{u}=\sum_{u \in R}\left(\alpha d_{G^*}(u) x_{u}+(1-\alpha) \sum_{u v \in E(G^*)} x_{v}\right) \\
& \leq \sum_{u \in R}(\alpha d_{G^*}(u)+(1-\alpha) d_{G^*}(u))=\sum_{u\in R} d_{G^*}(u) \\
& \leq 2 e(R)+e(R, A\cup B)\leq C_{1}|R|+s|R|=(C_1+s)|R|,
\end{aligned}
$$
which implies that
\begin{equation}\label{equ::1.7}
\sum_{u \in R} x_{u} \leq \frac{(C_1+s)|R|}{\alpha(n-1)}.
\end{equation}
Now, take an arbitrary vertex $v \in V(G^*) \backslash A=B \cup R$. It follows form 
(\ref{equ::1.8})-(\ref{equ::1.6}) that
\begin{equation}\label{equ::1.11}
\left|N_{G^*}(v) \cap(A \cup B)\right| \leq s,
\end{equation}
and so
\begin{equation}\label{equ::1.9}
d_{G^*}(v) \leq \left|N_{G^*}(v) \cap(A \cup B)\right|+|R|\leq s+|R|.
\end{equation}
Moreover, by $A_{\alpha}(G^{*})\mathbf{x}=\rho_{\alpha}^{*}\mathbf{x}$ and (\ref{equ::1.7})-(\ref{equ::1.11}), we obtain
\begin{align*}
\left(\rho_{\alpha}^*-\alpha d_{_{G^*}}(v)\right) x_{v}&=(1-\alpha) \sum_{u v \in E(G^*)} x_{u} =(1-\alpha)\left(\sum_{\substack{u \in N_{G^*}(v) \\ u \in A \cup B}} x_{u}+\sum_{\substack{u \in N_{G^*}(v) \\ u \in R}} x_{u}\right) \\
&\leq(1-\alpha)\left(s+\sum_{u \in R} x_{u}\right)\leq(1-\alpha)\left(s+\frac{(C_1+s)|R|}{\alpha(n-1)}\right).
\end{align*}
Combining with Claim \ref{clm::3.1}, (\ref{equ::1.5}) and (\ref{equ::1.9}), we get that
\begin{align*}
x_{v}  &\leq \frac{(1-\alpha)\left(s+\frac{(C_1+s)|R|}{\alpha(n-1)}\right)}{\rho_{\alpha}^*-\alpha d_{G^*}(v)} \leq \frac{(1-\alpha)\left(s+\frac{(C_1+s)|R|}{\alpha(n-1)}\right)}{\alpha(n-1)-\alpha(s+|R|)}\leq \frac{(1-\alpha)\left(s+\frac{(C_1+s)C_5\epsilon s n}{\alpha(n-1)}\right)}{\alpha(n-1)-\alpha(s+C_5\epsilon s n)}\\
&\leq \frac{(1-\alpha)\cdot\frac{s\alpha+ \left(C_{1}+s\right) C_5\epsilon s}{\alpha}\cdot \frac{n}{n-1}}{\alpha((1-C_5\epsilon s) n-s-1)}\leq \frac{(1-\alpha)\cdot\frac{s\alpha+ \left(C_{1}+s\right) C_5\epsilon s}{\alpha}\cdot \frac{n}{n-1}}{\alpha(1-C_5\epsilon s-\epsilon) n}
 \leq \frac{\sqrt{2\alpha}}{2(C_{1}+1)},
\end{align*}
where the  second last inequality holds as long as $n$ is sufficiently large such that $n \geq \frac{s+1}{\epsilon}$, and the last inequality holds as long as $n$ is sufficiently large such that 
$$
n \geq \frac{2(C_{1}+1)(1-\alpha)(s\alpha+ \left(C_{1}+s\right) C_5\epsilon s)}{\sqrt{2\alpha}\alpha^2(1-C_5\epsilon s-\epsilon)}+1.
$$
\end{proof}
\begin{clm}\label{clm::3.9}
$R$ is empty.
\end{clm}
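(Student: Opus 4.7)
The strategy is proof by contradiction. Suppose $R\neq\emptyset$; we shall produce an $F_s$-minor-free graph $G'$ with $\rho_\alpha(G')>\rho_\alpha^*$, violating the extremal choice of $G^*$. Construct $G'$ from $G^*$ by performing the following switching at every $v\in R$: delete all edges from $v$ to $B\cup R$ and insert the missing edges from $v$ to $A$. By Claim \ref{clm::3.6}, $A$ already induces a clique and $B$ is already independent in $G^*$, so after the switching one sees $G'=K_s\vee \overline{K}_{n-s}$. A short pigeonhole argument on a hypothetical vertex model of $F_s=K_1\vee sK_2$ in $K_s\vee \overline{K}_{n-s}$ (at most $s$ of the $2s+1$ branch sets can meet the clique side, so some pair forming a matching edge of $F_s$ lies entirely in the independent side and is thus nonadjacent) shows that $G'$ is $F_s$-minor-free.

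The second step is a Rayleigh-quotient comparison using the Perron vector $\mathbf{x}$ of $A_\alpha(G^*)$: $\rho_\alpha(G')\geq \rho_\alpha^* + \Delta/(\mathbf{x}^T\mathbf{x})$, where $\Delta:=\mathbf{x}^T(A_\alpha(G')-A_\alpha(G^*))\mathbf{x}$. Splitting $\Delta$ into its $\alpha$-type (degree change) and $(1-\alpha)$-type (edge rewiring) parts, the dominant positive contribution is
\[
\alpha\sum_{u\in A}\bigl(|R|-d_R(u)\bigr)x_u^2\geq \alpha(1-C_5\epsilon)^2|R|,
\]
using $x_u\geq 1-C_5\epsilon$ on $A$ (Claim \ref{clm::3.5}) together with $s|R|-e(A,R)\geq |R|$, valid because $d_A(v)\leq s-1$ for $v\in R$.

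All remaining summands of $\Delta$ are controlled by a constant multiple of $|R|M^2$ with $M=\sqrt{2\alpha}/(2(C_1+1))$, via the ceiling $x_v\leq M$ for $v\notin A$ (Claim \ref{clm::3.8}), the sparsity bounds $e(R)\leq \tfrac{C_1}{2}|R|$ (Lemma \ref{lem::2.5}) and $e(R,B)\leq |R|$ (Lemma \ref{lem::2.2}(i), since $d_B(v)\leq 1$ for $v\in R$), and the degree-sum bound $\sum_{v\in R}d_{G^*}(v)\leq (C_1+s)|R|$. Assembling these estimates produces
\[
\Delta\geq |R|\,\alpha\left[(1-C_5\epsilon)^2-\frac{C_1+2+\alpha(s-1)}{2(C_1+1)^2}\right],
\]
and the choice $M^2=\alpha/(2(C_1+1)^2)$ set up in Claim \ref{clm::3.8} is precisely what is needed: upon taking $\epsilon$ small and (if necessary) inflating the Lemma \ref{lem::2.5} constant $C_1$ to ensure $2(C_1+1)^2>C_1+2+\alpha(s-1)$, the bracket is strictly positive. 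Hence $\Delta>0$, so $\rho_\alpha(G')>\rho_\alpha^*\geq \rho_\alpha(K_s\vee\overline{K}_{n-s})=\rho_\alpha(G')$, a contradiction, forcing $R=\emptyset$. The main obstacle is exactly this balancing of constants: the term $\alpha\sum_{v\in R}(s-d_{G^*}(v))x_v^2$ can be as negative as $-\alpha M^2(C_1+s)|R|$ in the worst case $d_{G^*}(v)\sim s+|R|$, and only the sharp $x_v$-bound from Claim \ref{clm::3.8}, calibrated against the sparsity constant $C_1$, keeps the negative mass strictly below the $\alpha(1-C_5\epsilon)^2|R|$ gain from $A$.
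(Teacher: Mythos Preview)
Your proof is correct and follows essentially the same strategy as the paper: assume $R\neq\emptyset$, rewire every $v\in R$ to obtain $G'=K_s\vee\overline{K}_{n-s}$, and show via the Rayleigh quotient that $\rho_\alpha(G')>\rho_\alpha^*$. The only notable technical difference is in the bookkeeping of the loss terms. The paper first fixes a degeneracy ordering $v_1,\dots,v_{|R|}$ of $R$ (so that each $v_i$ has at most $C_1$ neighbours in $\{v_{i+1},\dots,v_{|R|}\}$), attributes every internal $R$-edge to its lower-indexed endpoint, and bounds the loss vertex-by-vertex by $(C_1+1)\cdot 2M^2$; this yields the clean comparison $(1-C_5\epsilon)^2>\frac{1}{C_1+1}$, which holds for any $C_1>0$ once $\epsilon$ is small. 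You instead split $\Delta$ into its $\alpha D$- and $(1-\alpha)A$-parts and bound globally via $2e(R)\le C_1|R|$, $e(R,B)\le|R|$, and $\sum_{v\in R}d_{G^*}(v)\le(C_1+s)|R|$; this gives a slightly weaker constant requiring $2(C_1+1)^2>C_1+2+\alpha(s-1)$, which you correctly note can be arranged by enlarging $C_1$ (and in fact holds automatically, since $K_s\vee\overline{K}_{n-s}$ already forces $C_1\ge 2s$). Both routes are valid; the paper's ordering trick just avoids the extra remark about the size of $C_1$.
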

\begin{proof}
Suppose to the contrary that $R$ is not empty. Since $G^*[R]$ is  $F_{s}$-minor-free, by Lemma \ref{lem::2.5} we have
$2e(R) \leq C_{1}|R|$, which implies that the average degree of $G^*[R]$ is at most $C_1$. Hence, there exists a vertex $v_{1} \in R$ such that $d_{R}(v_{1})\leq C_1$. By reusing Lemma \ref{lem::2.5}, we can order the vertices $v_2, \ldots, v_{|R|} \in R\setminus\{v_1\}$ such that $v_{i} \in R\setminus \{v_1, \ldots, v_{i-1}\}$ with $d_{R\setminus \{v_1, \ldots, v_{i-1}\}}(v_{i})\leq C_1$ for each $i=2, \ldots, |R| $. Clearly, $R=\{v_1, v_2, \ldots, v_{|R|} \}$, $v_1$ has at most $C_1$ neighbors in $R$, and each $v_i$ has at most $C_1$ neighbors in $R\setminus \{v_1, \ldots, v_{i-1}\}$ for $i \in \{2, \ldots, |R|\}$. By the definition of $R$, each $v_i$ ($i \in \{1, 2, \ldots, |R|\}$) has at least one non-neighbor in $A$. Besides, it follows from Lemma \ref{lem::2.2}(i) that each vertex $v_i$ ($i \in \{1, 2, \ldots, |R|\}$) has at most one neighbor in $B$. Let
\begin{align*}
G^{\prime}=& G^*-\{v_{i}u \in E(G^*) \mid  i\in \{1, 2, \dots, |R|\}, u \in R\}-\\
&\{ v_{i}w \in E(G^*)\mid  i\in \{1, 2, \dots, |R|\}, w \in B\}
 +\{v_{i} z \notin E(G^*)\mid  i\in \{1, 2, \dots, |R|\}, z \in A\}.
\end{align*}
Clearly, $G^{\prime}=K_{s} \vee \overline{K}_{n-s}$, thus $G^{\prime}$ is also $F_s$-minor-free. Furthermore, by Claims \ref{clm::3.5} and \ref{clm::3.8} we obtain
\begin{align*}
\rho_\alpha\left(G^{\prime}\right)-\rho_\alpha^* \geq& \frac{\mathbf{x}^{T} A_\alpha\left(G^{\prime}\right) \mathbf{x}}{\mathbf{x}^{T} \mathbf{x}}-\frac{\mathbf{x}^{T} A_\alpha \left(G^{*}\right)\mathbf{x}}{\mathbf{x}^{T} \mathbf{x}} \\
=&\frac{1}{\mathbf{x}^{T} \mathbf{x}} \sum_{i=1}^{|R|} \left(\sum_{\substack{v_i z \notin E(G^*) \\
z \in A}} ( \alpha x_{v_i}^2+2(1-\alpha) x_{v_i} x_{z}+\alpha x_{z}^2) -\sum_{\substack{u v_i\in E(G^*) \\
u\in R}} ( \alpha x_{u}^2+ \right. \\
&\left.2(1-\alpha) x_{u} x_{v_i}+\alpha x_{v_i}^2)-\sum_{\substack{v_i w \in E(G^*) \\
w \in B}} ( \alpha x_{v_i}^2+2(1-\alpha) x_{v_i} x_{w}+\alpha x_{w}^2)\right) \\
\geq& \frac{|R|}{\mathbf{x}^{T} \mathbf{x}}\left(\alpha(1-C_5\epsilon)^{2} -\frac{ (C_1+1)(\alpha+2(1-\alpha)+\alpha )(\sqrt{2\alpha})^2}{(2(C_1+1))^{2}}\right) \\
=&\frac{|R|\alpha}{\mathbf{x}^{T} \mathbf{x}}\left((1-C_5\epsilon)^{2} -\frac{1}{C_1+1}\right) >0,
\end{align*}
where the last inequality holds if we choose $\epsilon$ small enough such that $(1-C_5\epsilon)^{2} > \frac{1}{C_1+1}$. Hence, $\rho_\alpha\left(G^{\prime}\right)>\rho_\alpha^*$, a contradiction. 
\end{proof}
By Claims \ref{clm::3.6} and \ref{clm::3.9}, we obtain $G^* = K_{s} \vee \overline{K}_{n-s}$, as desired. This completes the proof of Theorem \ref{thm::1.1}.
\end{proof}

\section{Proof of Theorem \ref{thm::1.2}}
\renewcommand\proofname{\bf Proof of Theorem \ref{thm::1.2}.}
\begin{proof}
Let $0<\alpha<1$, $t\geq 1$ and $G^{\ast}$ be an extremal graph with maximum $\alpha$-index among all $Q_{t}$-minor-free graphs of sufficiently large order $n$. Set for short $\rho_{\alpha}^{\ast}=\rho_{\alpha}(G^{\ast})$ and $\mathbf{x} = (x_{u})_{u\in V (G^{\ast})}$ with the maximum entry $1$ be a positive vector of $A_{\alpha}(G^{\ast})$ corresponding to $\rho_{\alpha}^{\ast}$. Since $K_{t} \vee \overline{K}_{n-t}$ is $Q_{t}$-minor-free, by Lemma \ref{lem::2.3} we obtain  
\begin{align}\label{equ::2.1}
\rho_{\alpha}^{*}\geq \rho_{\alpha}(K_{t} \vee \overline{K}_{n-t}) \geq \max \left\lbrace  \alpha(n-1), \alpha n+\frac{2 t-1-(2 t+1) \alpha}{2 \alpha} \right\rbrace.
\end{align} 
Similar to the proof of Claim \ref{clm::3.1-}, we get that $G^{\ast}$ is connected. 
Now, take an arbitrary vertex $u^{\ast} \in V (G^{\ast})$ with 
$$x_{u^{\ast}} =\max\{x_{u} : u \in V (G^{\ast})\}= 1.$$ 
Let 
$$L=\left\{u \in V(G^{\ast}): x_{u}>\epsilon\right\} \quad \text{and} \quad S=\left\{u \in V(G^{\ast}): x_{u} \leq \epsilon\right\},$$
where $\epsilon$ is a small constant which will be chosen later. Clearly, $V(G^{\ast})=L \cup S$. 
By Lemma \ref{lem::2.5}, 
there is a constant $C_{1}$ such that 
$$2 e(S) \leq 2 e(G^{*}) \leq C_{1} n.$$
Furthermore, similar to the proofs of Claims \ref{clm::3.3}-\ref{clm::3.5}, if we choose $\epsilon$ small enough, then there exists a constant $C^{\prime}$ and a set $A=\{v_{1}, v_{2}, \dots, v_{t}\}\subseteq L$ such that
\begin{equation}\label{equ::2.2}
x_{v_i} \geq 1-C^{\prime}\epsilon,
\end{equation}
and $d_{G^*}(v_i)\geq (1-C^{\prime}\epsilon)n$ for each $i=1, 2, \dots, t$. Let
$B= \bigcap_{i=1}^{t} N_{G^*}\left(v_{i}\right)$ and $R=V(G^*) \backslash(A \cup B)$. Then by Lemma \ref{lem::2.6} we obtain
$$
\begin{aligned}
|B| \geq \sum_{i=1}^{t}d_{G^*}(v_{i})-(t-1)\left| \bigcup_{i=1}^{t} N_{G^*}(v_{i})\right|  \geq t(1-C^{\prime}\epsilon) n-(t-1) n=(1-C^{\prime}\epsilon t ) n,
\end{aligned}
$$
and so
\begin{equation}\label{equ::2.3}
|R|=n-|A|-|B| \leq C^{\prime}\epsilon t n.
\end{equation}
\renewcommand\proofname{\bf Proof}
\begin{clm}\label{clm::4.2}
$G^*[A]$ is a clique. Moreover, $G^*[B]$ consists of some independent edges and isolated vertices.
\end{clm}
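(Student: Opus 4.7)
The plan is to mirror the argument for Claim \ref{clm::3.6}, substituting Lemma \ref{lem::2.9} for Lemma \ref{lem::2.2}. By construction, $G^*$ contains a complete bipartite subgraph $G^*[A,B]$ with $|A|=t$ and $|B|=(1-\delta)n$, where $\delta\geq C'\epsilon t$ thanks to (\ref{equ::2.3}). Choosing $\epsilon$ small enough and $n$ sufficiently large, both $(1-\delta)n\geq 2t+1$ and $(1-3\delta)n\geq 3t+1$ are satisfied, so Lemma \ref{lem::2.9} is applicable in both parts.

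For the clique assertion, I would argue by contradiction. Suppose $G^*[A]$ is not a clique, and let $G'$ be the graph obtained from $G^*$ by adding all missing edges inside $A$. By Lemma \ref{lem::2.9}(ii), $G'$ remains $Q_t$-minor-free. Since $G^*$ is connected (by the analogue of Claim \ref{clm::3.1-}) and is a proper subgraph of $G'$, Lemma \ref{lem::2.1-} yields $\rho_\alpha(G')>\rho_\alpha(G^*)=\rho_\alpha^*$, contradicting the extremality of $G^*$. Hence $G^*[A]$ must be a clique.

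For the structure of $G^*[B]$, I would invoke Lemma \ref{lem::2.9}(i), which tells us that $G^*[B]$ is $P_3$-free. A graph that contains no $P_3$ as a subgraph has every connected component of order at most $2$; equivalently, it is a disjoint union of isolated vertices and isolated edges. This gives exactly the claimed description: $G^*[B]$ consists of some independent (matching) edges together with isolated vertices.

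There is no real obstacle here beyond verifying that the size thresholds needed in Lemma \ref{lem::2.9} hold for $\epsilon$ small and $n$ large, which is automatic from (\ref{equ::2.3}). The only qualitative difference from Claim \ref{clm::3.6} is the weaker conclusion on $B$: forbidding $F_s$-minors forces $B$ to be independent via a $P_2$-free condition, whereas forbidding $Q_t$-minors only forces $G^*[B]$ to be $P_3$-free, allowing the matching edges to survive.
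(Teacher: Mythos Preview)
Your proposal is correct and follows essentially the same route as the paper's proof: both invoke Lemma~\ref{lem::2.9}(ii) together with Lemma~\ref{lem::2.1-} and extremality to force $G^*[A]$ to be a clique, and Lemma~\ref{lem::2.9}(i) to conclude $G^*[B]$ is $P_3$-free and hence a matching plus isolated vertices. One minor slip (shared with the paper's own write-up of Claim~\ref{clm::3.6}): from $|B|\geq(1-C'\epsilon t)n$ one gets $\delta\leq C'\epsilon t$, not $\delta\geq C'\epsilon t$; this is the direction actually needed to ensure $(1-3\delta)n\geq 3t+1$ for large $n$.
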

\begin{proof}
Obviously, $G^*$ contains a complete bipartite subgraph $G^*[A, B]$ with $|A|=t$ and  $|B|=(1-\delta) n$, where $\delta \geq C^{\prime}\epsilon t$. Furthermore,  $(1-3 \delta) n \geq 3 t+1$ for sufficiently large $n$. Suppose  that $G^*[A]$ is not a clique. Let $G^{\prime}$ be the graph obtained from $G^*$ by adding edges to $A$ to make it a clique. Then $G^{\prime}$ is also $Q_{t}$-minor-free by Lemma \ref{lem::2.9}(ii). Since $G^*$ is a proper subgraph of $G^{\prime}$, we have $\rho_{\alpha}(G^{\prime}) > \rho_{\alpha}^{*}$ by Lemma \ref{lem::2.1-}, a contradiction. Hence, $A$ induces a clique in $G^*$. Moreover, since $(1-\delta) n\geq(1-3 \delta) n \geq 3 t+1> 2t+1$, we see that $G^*[B]$ is $P_{3}$-free by Lemma \ref{lem::2.9}(i), which implies that $G^*[B]$ consists of some independent edges and isolated vertices.
\end{proof}
	
\begin{clm}\label{clm::4.3}
$x_{v} \leq \frac{\sqrt{2\alpha}}{2(C_{1}+2)}$ for any $v \in V(G^*) \backslash A$.
\end{clm}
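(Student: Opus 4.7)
The plan is to mirror the argument of Claim \ref{clm::3.8} verbatim, but with the bookkeeping adapted to the $Q_t$-minor-free setting. The crucial difference is that $G^*[B]$ is no longer edgeless: by Claim \ref{clm::4.2} it is a disjoint union of independent edges and isolated vertices, so we only have $d_B(v)\le 1$ for $v\in B$ (rather than $0$). Similarly, Lemma \ref{lem::2.9}(i) gives $d_B(v)\le 2$ for $v\in R$, whereas in the $F_s$-case we had $d_B(v)\le 1$. These shifts explain the change of denominator from $2(C_1+1)$ in Claim \ref{clm::3.8} to $2(C_1+2)$ here.

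First I would bound $|N_{G^*}(v)\cap(A\cup B)|$ uniformly for $v\in V(G^*)\setminus A$. For $v\in B$, since $v\in N_{G^*}(v_i)$ for every $i$ we have $d_A(v)=t$, and $d_B(v)\le 1$ by Claim \ref{clm::4.2}, giving $|N_{G^*}(v)\cap(A\cup B)|\le t+1$. For $v\in R$, by the definition of $R$ some $v_i$ is a non-neighbour of $v$, hence $d_A(v)\le t-1$, while Lemma \ref{lem::2.9}(i) yields $d_B(v)\le 2$; again $|N_{G^*}(v)\cap(A\cup B)|\le t+1$. Consequently $d_{G^*}(v)\le t+1+|R|$ for every $v\in V(G^*)\setminus A$.

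Next, I would control $\sum_{u\in R}x_u$ exactly as in Claim \ref{clm::3.8}. Since $G^*[R]$ is $Q_t$-minor-free, Lemma \ref{lem::2.5} gives $2e(R)\le C_1|R|$. Applying $A_\alpha(G^*)\mathbf{x}=\rho_\alpha^*\mathbf{x}$ to each $u\in R$, summing, and using $\rho_\alpha^*\ge\alpha(n-1)$ from \eqref{equ::2.1}, together with $e(R,A\cup B)\le(t+1)|R|$, yields a bound of the shape
\[
\sum_{u\in R}x_u\le\frac{(C_1+t+1)|R|}{\alpha(n-1)}.
\]

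Finally, for an arbitrary $v\in V(G^*)\setminus A$ the eigen-equation gives
\[
(\rho_\alpha^*-\alpha d_{G^*}(v))x_v=(1-\alpha)\!\sum_{uv\in E(G^*)}\!x_u\le(1-\alpha)\Bigl((t+1)+\sum_{u\in R}x_u\Bigr),
\]
because the $t+1$ neighbours in $A\cup B$ contribute at most $1$ each. Using \eqref{equ::2.1}, the degree bound $d_{G^*}(v)\le t+1+|R|$, and \eqref{equ::2.3} to replace $|R|$ by $C'\epsilon t n$, I would divide through and check that for $n$ large (depending on $t,\alpha,\epsilon,C_1,C'$) the ratio is at most $\frac{\sqrt{2\alpha}}{2(C_1+2)}$. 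The only real obstacle is the bookkeeping: one must verify that the extra $+1$ absorbed from the $d_B(v)\le 1,2$ contributions propagates correctly into the denominator $2(C_1+2)$; this is precisely what forces the final threshold
\[
n\ge\frac{2(C_1+2)(1-\alpha)\bigl(t\alpha+(C_1+t+1)C'\epsilon t\bigr)}{\sqrt{2\alpha}\,\alpha^2(1-C'\epsilon t-\epsilon)}+1,
\]
analogous to the final inequality in the proof of Claim \ref{clm::3.8}.
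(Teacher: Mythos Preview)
Your proposal is correct and follows essentially the same argument as the paper's own proof: the same neighbourhood bounds $|N_{G^*}(v)\cap(A\cup B)|\le t+1$, the same estimate $\sum_{u\in R}x_u\le\frac{(C_1+t+1)|R|}{\alpha(n-1)}$, and the same concluding division. The only slip is in your displayed threshold, where the numerator should carry $\alpha(t+1)$ rather than $t\alpha$ (reflecting the $t+1$ neighbours in $A\cup B$, not $t$); this is a harmless bookkeeping typo and does not affect the argument.
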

\begin{proof}
Note that $G^*[B]$ consists of some independent edges and isolated vertices by  Claim \ref{clm::4.2}. Then for any vertex $v \in B$, we obtain $d_B(v) \leq 1$, and so
\begin{equation}\label{equ::2.6}
|N_{G^*}(v) \cap(A \cup B)|=d_{A}(v)+d_{B}(v)\leq t+1.
\end{equation}
In addition, for any vertex $v\in R$, we know that $d_{A}(v)\leq t-1$ by the definition of $R$, and $d_{B}(v)\leq 2$ by Lemma \ref{lem::2.9}(i), thus
\begin{equation}\label{equ::2.4}
\left|N_{G^*}(v) \cap(A \cup B)\right|=d_{A}(v)+d_{B}(v) \leq t-1+2=t+1.
\end{equation}
Since $G^*[R]$ is $Q_{t}$-minor-free, we have
$2 e(R) \leq C_{1}|R|$ by Lemma \ref{lem::2.5}.
Moreover, by $A_{\alpha}(G^{*})\mathbf{x}=\rho_{\alpha}^{*}\mathbf{x}$ and (\ref{equ::2.1}), it is easy to see that
$$
\begin{aligned}
\alpha(n-1) \sum_{u \in R} x_{u} & \leq \rho_{\alpha}^* \sum_{u\in R} x_{u}=\sum_{u \in R}\left(\alpha d_{G^*}(u) x_{u}+(1-\alpha) \sum_{u v \in E(G^*)} x_{v}\right) \\
& \leq \sum_{u \in R}(\alpha d_{G^*}(u)+(1-\alpha) d_{G^*}(u))=\sum_{u \in R} d_{G^*}(u) \\
& \leq 2 e(R)+e(R, A\cup B)\leq C_{1}|R|+(t+1)|R|=(C_1+t+1)|R|,
\end{aligned}
$$
which implies that
\begin{equation}\label{equ::2.5}
\sum_{u \in R} x_{u} \leq \frac{(C_1+t+1)|R|}{\alpha(n-1)}.
\end{equation}
Now, take an arbitrary vertex $v \in V(G^*) \backslash A=B \cup R$. It follows from  (\ref{equ::2.6})-(\ref{equ::2.4}) that 
\begin{equation}\label{equ::2.8}
\left|N_{G^*}(v) \cap(A \cup B)\right| \leq t+1,
\end{equation}
and so
\begin{equation}\label{equ::2.7}
d_{G^*}(v) \leq \left|N_{G^*}(v) \cap(A \cup B)\right|+|R|\leq t+1+|R|.
\end{equation}
Furthermore, by $A_{\alpha}(G^{*})\mathbf{x}=\rho_{\alpha}^{*}\mathbf{x}$ and (\ref{equ::2.5})-(\ref{equ::2.8}), it is easy to see that
\begin{align*}
\left(\rho_{\alpha}^*-\alpha d_{_{G^*}}(v)\right) x_{v}&=(1-\alpha) \sum_{u v \in E(G^*)} x_{u} =(1-\alpha)\left(\sum_{\substack{u \in N_{G^*}(v) \\ u \in A \cup B}} x_{u}+\sum_{\substack{u \in N_{G^*}(v) \\ u \in R}} x_{u}\right) \\
&\leq(1-\alpha)\left(t+1+\sum_{u \in R} x_{u}\right)\leq(1-\alpha)\left(t+1+\frac{(C_1+t+1)|R|}{\alpha(n-1)}\right).
\end{align*}
Combining with (\ref{equ::2.1}), (\ref{equ::2.3}) and (\ref{equ::2.7}), we obtain
\begin{align*}
x_{v}  &\leq \frac{(1-\alpha)\left(t+1+\frac{(C_1+t+1)|R|}{\alpha(n-1)}\right)}{\rho_{\alpha}^*-\alpha d_{G^*}(v)} \leq \frac{(1-\alpha)\left(t+1+\frac{(C_1+t+1)|R|}{\alpha(n-1)}\right)}{\alpha(n-1)-\alpha(t+1+|R|)}\\
&\leq \frac{(1-\alpha)\left(t+1+\frac{(C_1+t+1)C^{\prime}\epsilon t n}{\alpha(n-1)}\right)}{\alpha(n-1)-\alpha(t+1+C^{\prime}\epsilon t n)}
\leq \frac{(1-\alpha)\cdot\frac{\alpha(t+1)+(C_1+t+1)C^{\prime}\epsilon t}{\alpha} \cdot \frac{n}{n-1}}{\alpha((1-C^{\prime}\epsilon t) n-t)}\\
&\leq\frac{(1-\alpha)\cdot\frac{\alpha(t+1)+(C_1+t+1)C^{\prime}\epsilon t}{\alpha} \cdot \frac{n}{n-1}}{\alpha(1-C^{\prime}\epsilon t-\epsilon) n}
 \leq \frac{\sqrt{2\alpha}}{2(C_{1}+2)},
\end{align*}
where the  second last inequality holds as long as $n$ is sufficiently large such that $n \geq \frac{t}{\epsilon}$, and the last inequality holds as long as $n$ is sufficiently large such that
$$
n \geq \frac{2(C_{1}+2)(1-\alpha)(\alpha(t+1)+(C_1+t+1)C^{\prime}\epsilon t)}{\sqrt{2\alpha}\alpha^2(1-C^{\prime}\epsilon t-\epsilon)}+1.
$$
\end{proof}
\begin{clm}\label{clm::4.4}
$R$ is empty.
\end{clm}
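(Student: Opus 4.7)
The plan is to mirror the strategy of Claim \ref{clm::3.9}, with two minor adjustments to fit the $Q_t$-setting. Assume for contradiction that $R$ is non-empty. Since $G^*[R]$ is $Q_t$-minor-free, Lemma \ref{lem::2.5} gives $2e(R) \leq C_1 |R|$, so $G^*[R]$ has average degree at most $C_1$. Iterating this, one can enumerate $R = \{v_1,\dots,v_{|R|}\}$ so that each $v_i$ has at most $C_1$ neighbors in $R\setminus\{v_1,\dots,v_{i-1}\}$ (re-using the notation $v_i$, exactly as in Claim \ref{clm::3.9}). By the definition of $R$ every such $v_i$ has at least one non-neighbor in $A$, and by Lemma \ref{lem::2.9}(i) we have $d_B(v_i)\leq 2$; this is the only quantitative difference from the $F_s$-analogue, where the bound from Lemma \ref{lem::2.2}(i) is $1$.

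The modified graph I would then build is
\begin{align*}
G' = G^* &- \{v_i u \in E(G^*) : 1\leq i\leq |R|,\ u\in R\} \\
         &- \{v_i w \in E(G^*) : 1\leq i\leq |R|,\ w\in B\} \\
         &+ \{v_i z \notin E(G^*) : 1\leq i\leq |R|,\ z\in A\},
\end{align*}
that is, delete every edge inside $R$ and every edge between $R$ and $B$, and then fully join every $v_i\in R$ to $A$. The resulting graph has the form $K_t \vee H$, where $H$ is the disjoint union of $G^*[B]$ with $|R|$ isolated vertices. By Claim \ref{clm::4.2}, $G^*[B]$ consists of some independent edges and isolated vertices, so $H$ is a spanning subgraph of $M_{n-t}$; hence $G'$ is a spanning subgraph of $K_t \vee M_{n-t}$, which is $Q_t$-minor-free. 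Therefore $G'$ is $Q_t$-minor-free.

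It remains to show $\rho_\alpha(G') > \rho_\alpha^*$ via the Rayleigh-quotient bound $\rho_\alpha(G') \geq \mathbf{x}^T A_\alpha(G')\mathbf{x}/\mathbf{x}^T\mathbf{x}$. For each $v_i\in R$, the added edges include at least one term $v_i z$ with $z\in A$, contributing $\alpha x_{v_i}^2 + 2(1-\alpha) x_{v_i} x_z + \alpha x_z^2 \geq \alpha(1-C'\epsilon)^2$ by \eqref{equ::2.2}. The deleted $R$-$R$ edges account for at most $2e(R)\leq C_1|R|$ terms in the sum over $i$, and the deleted $R$-$B$ edges account for at most $2|R|$ terms; using Claim \ref{clm::4.3}, each such removed term is bounded by
$$
(\alpha + 2(1-\alpha) + \alpha)\cdot\frac{2\alpha}{4(C_1+2)^2} = \frac{\alpha}{(C_1+2)^2}.
$$
Assembling these estimates,
$$
\rho_\alpha(G') - \rho_\alpha^* \geq \frac{|R|\alpha}{\mathbf{x}^T\mathbf{x}}\left((1-C'\epsilon)^2 - \frac{1}{C_1+2}\right) > 0,
$$
provided $\epsilon$ is chosen small enough that $(1-C'\epsilon)^2 > 1/(C_1+2)$, which contradicts the maximality of $\rho_\alpha^*$.

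The main obstacle I expect is the bookkeeping in the Rayleigh-quotient computation: one must verify that the denominator $2(C_1+2)$ in Claim \ref{clm::4.3} (as opposed to $2(C_1+1)$ in the $F_s$ case) is exactly what absorbs simultaneously the $C_1$ back-neighbors in $R$ and the at-most-$2$ neighbors in $B$ permitted by Lemma \ref{lem::2.9}(i). Once the per-edge bounds and edge counts are lined up with matching constants, the positivity of $\rho_\alpha(G') - \rho_\alpha^*$ follows.
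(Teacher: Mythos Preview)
Your proposal is correct and follows essentially the same route as the paper's own proof of Claim \ref{clm::4.4}: the ordering of $R$ via Lemma \ref{lem::2.5}, the construction of $G'$, and the Rayleigh-quotient estimate with the constant $C_1+2$ absorbing the $C_1$ back-neighbors in $R$ together with the at-most-$2$ neighbors in $B$ all match. One small point where you are in fact more careful than the paper: you correctly observe that $G'$ is only a spanning \emph{subgraph} of $K_t \vee M_{n-t}$ (since $G^*[B]$ need not be a full matching), whereas the paper asserts equality; either way $G'$ is $Q_t$-minor-free, so the argument goes through.
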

\begin{proof}
Suppose to the contrary that $R$ is not empty. Since $G^*[R]$ is  $Q_{t}$-minor-free, by Lemma \ref{lem::2.5} we have
$2e(R) \leq C_{1}|R|$, which implies that the average degree of $G^*[R]$ is at most $C_1$. Hence, there exists a vertex $v_{1} \in R$ such that $d_{R}(v_{1})\leq C_1$. By reusing Lemma \ref{lem::2.5}, we can order the vertices $v_2, \ldots, v_{|R|} \in R\setminus\{v_1\}$ such that $v_{i} \in R\setminus \{v_1, \ldots, v_{i-1}\}$ with $d_{R\setminus \{v_1, \ldots, v_{i-1}\}}(v_{i})\leq C_1$ for each $i=2, \ldots, |R| $. Clearly, $R=\{v_1, v_2, \ldots, v_{|R|}\}$, $v_1$ has at most $C_1$ neighbors in $R$, and each $v_i$ has at most $C_1$ neighbors in $R\setminus \{v_1, \ldots, v_{i-1}\}$ for $i \in \{2, \ldots, |R|\}$. By the definition of $R$, each $v_i$ ($i \in \{1, 2, \ldots, |R|\}$) has at least one non-neighbor in $A$. Besides,  it follows from Lemma \ref{lem::2.9}(i) that each vertex $v_i$ ($i \in \{1, 2, \ldots, |R|\}$) has at most two neighbors in $B$. Let 
\begin{align*}
G^{\prime}=& G^*-\{v_{i}u \in E(G^*) \mid  i\in \{1, 2, \dots, |R|\}, u \in R\}-\\
&\{ v_{i}w \in E(G^*)\mid  i\in \{1, 2, \dots, |R|\}, w \in B\}
+\{v_{i} z \notin E(G^*)\mid  i\in \{1, 2, \dots, |R|\}, z \in A\}.
\end{align*}
Clearly, $G^{\prime}=K_{t} \vee M_{n-t}$, thus $G^{\prime}$ is also $Q_t$-minor-free. Furthermore, by (\ref{equ::2.2}) and Claim \ref{clm::4.3} we obtain
\begin{align*}
\rho_\alpha\left(G^{\prime}\right)-\rho_\alpha^* \geq& \frac{\mathbf{x}^{T} A_\alpha\left(G^{\prime}\right) \mathbf{x}}{\mathbf{x}^{T} \mathbf{x}}-\frac{\mathbf{x}^{T} A_\alpha \left(G^{*}\right)\mathbf{x}}{\mathbf{x}^{T} \mathbf{x}} \\
=&\frac{1}{\mathbf{x}^{T} \mathbf{x}} \sum_{i=1}^{|R|} \left(\sum_{\substack{v_i z \notin E(G^*) \\
z \in A}} ( \alpha x_{v_i}^2+2(1-\alpha) x_{v_i} x_{z}+\alpha x_{z}^2) -\sum_{\substack{u v_i\in E(G^*) \\
u\in R}} ( \alpha x_{u}^2+ \right. \\
&\left.2(1-\alpha) x_{u} x_{v_i}+\alpha x_{v_i}^2)-\sum_{\substack{v_i w \in E(G^*) \\
w \in B}} ( \alpha x_{v_i}^2+2(1-\alpha) x_{v_i} x_{w}+\alpha x_{w}^2)\right) \\
\geq  &\frac{|R|}{\mathbf{x}^{T} \mathbf{x}}\left(\alpha(1-C^{\prime}\epsilon)^{2} -\frac{ (C_1+2)(\alpha+2(1-\alpha)+\alpha )(\sqrt{2\alpha})^2}{(2(C_1+2))^{2}}\right) \\
=&\frac{|R|\alpha}{\mathbf{x}^{T} \mathbf{x}}\left((1-C^{\prime}\epsilon)^{2} -\frac{1}{C_1+2}\right) >0,
\end{align*}
where the last inequality holds if we choose $\epsilon$ small enough such that $(1-C^{\prime}\epsilon)^{2} > \frac{1}{C_1+2}$. Hence, $\rho_\alpha\left(G^{\prime}\right)>\rho_\alpha^*$, a contradiction. 
\end{proof}
By Claims \ref{clm::4.2} and \ref{clm::4.4}, we get that $G^{*}$ is a subgraph of $K_{t} \vee M_{n-t}$. Then by Lemma \ref{lem::2.1-} and the maximality of  $\rho_{\alpha}^{*}$, we obtain $G^* = K_{t} \vee M_{n-t}$, as desired. This completes the proof of Theorem \ref{thm::1.2}.
\end{proof}

\section*{Declaration of competing interest}
The authors declare that they have no conflict of interest.

\section*{Data availability}
No data was used for the research described in the article.

\end{document}